\documentclass[10pt,b5paper]{article}
\usepackage[T1]{fontenc}
\usepackage[utf8]{inputenc}
\usepackage{hyperref}
\usepackage{amsmath,amssymb,amsthm}
\usepackage{geometry}
\usepackage{url}
\newcommand{\E}{\mathbb E}
\newcommand{\Bin}{\operatorname{Bin}}
\newcommand{\Z}{\mathbb Z}

\DeclareMathOperator{\Var}{Var}

\theoremstyle{plain}
\newtheorem{theorem}{Theorem}[section]
\newtheorem{lemma}[theorem]{Lemma}
\newtheorem{proposition}[theorem]{Proposition}
\newtheorem{corollary}[theorem]{Corollary}
\theoremstyle{remark}
\newtheorem{remark}[theorem]{Remark}

\title{Absolute moments of the binomial distribution folded at its mean}
\author{
N.~Elezovi\'c\\
Department of Applied Mathematics,\\
Faculty of Electrical Engineering and Computing,\\
University of Zagreb, 10000 Zagreb, Croatia\\
\texttt{neven.elezovic@fer.hr}
}
\date{\today}

\begin{document}
\maketitle

\begin{abstract}
	Let $X\sim\Bin(N,p)$ and \(Y=|X-Np|\).  We derive an exact reduction for every odd
	absolute moment of \(Y\), expressing it through finitely many central masses and central
	tail probabilities.  The tail coefficients satisfy a sum rule and are divisible by
	\(q-p\), so that in the symmetric case the odd ladder collapses to masses alone.  We then
	obtain the complete central-tail expansion at a bounded lattice displacement, with
	Bernoulli-polynomial coefficients in integer powers of the large parameter.  As an
	application, the first two tail coefficients yield a second-order expansion for the median
	of the beta distribution, whose formal one-parameter degeneration reproduces the first two
	terms of Choi's expansion for the gamma median.
\end{abstract}

\noindent\textbf{2020 Mathematics Subject Classification.}
	41A60, 60C05, 62E20, 11B68, 33B20.

\medskip\noindent\textbf{Keywords.}
	Folded binomial; absolute moments; mean absolute deviation; lattice oscillation; Bernoulli
	polynomials; incomplete beta function; median; beta distribution; gamma distribution.

\section{Introduction}\label{sec:intro}

	Let $X\sim\Bin(N,p)$, $0<p<1$, $q=1-p$, and let
	\[
		Y:=|X-Np|
	\]
	be the binomial folded at its mean.  The moment sequence of $Y$ splits by parity into two
	entirely different problems.  Even moments are exact polynomials:
	$\E Y^{2j}=\mu_{2j}$, the central moments of $X$.  Odd moments genuinely depend on the
	lattice.  The first of them, the mean absolute deviation, collapses by De~Moivre's classical
	identity \cite{diaconis_zabell} to a single mass, and its complete asymptotic expansion --- coefficients that are
	Bernoulli polynomials of the oscillating displacement $h_N=\lceil Np\rceil-Np$ --- was
	obtained in \cite{elezovic_mad}.  The variance of $Y$ is then immediate: since
	$\E Y^{2}=Npq$ exactly,
	\[
		\Var Y=Npq-4\nu^{2}q^{2}b(\nu;N,p)^{2},\qquad \nu=\lceil Np\rceil,
	\]
	and its expansion is the square of the one in \cite{elezovic_mad}; this observation is
	recorded in \cite[\S2]{elezovic_folded}, and Section~\ref{sec:variance} states it for
	completeness.

	The present paper treats the remaining odd moments, and locates where the genuinely new analysis
	sits.  The first contribution is the ladder reduction of Section~\ref{sec:ladder}.  One
	reduction step --- Abel summation
	against the telescoping identity that underlies De~Moivre's formula, followed by
	size-biasing --- lowers the degree of a truncated central moment by two while lowering $N$
	by one.  Iterating, every odd absolute moment takes the exact form
	\[
		\E\,Y^{m}
		=2\sum_{i=0}^{(m-1)/2}\kappa_i\;\nu q\,b(\nu;N-i,p)
		\;+\;\sum_{i=1}^{(m-1)/2}c_i\,\bigl[2\Pr\{\Bin(N-i,p)\ge\nu\}-1\bigr],
	\]
	with explicit polynomial coefficients; the $c_i$ do not depend on the lattice position.  Two
	structural laws govern the tail part, and we prove both: the \emph{sum rule}
	$\sum_ic_i=\mu_m$ --- obtained in one line by evaluating the same reduction at the cut
	$\nu=0$ --- and the divisibility of every $c_i$ by $q-p$, obtained from a parity invariant
	of the reduction.  In particular at $p=\tfrac12$ the tails vanish identically and every odd
	absolute moment of the symmetric binomial is a finite combination of central masses.

	The two-part shape --- boundary masses plus a residual central tail $2\Pr\{\cdot\}-1$ --- is
	not itself new: it is intrinsic to Katti's sign-function recurrence for absolute moments of
	discrete laws \cite{katti1960}, which bottoms out at exactly such a two-sided tail (and was
	recently revisited for the Poisson by Ruzankin \cite{ruzankin2020}); see also
	\cite{johnson_kotz_kemp}.  What is new here is its \emph{binomial} realisation in closed form
	--- the sum rule $\sum_ic_i=\mu_m$, the divisibility of every $c_i$ by $q-p$, and the explicit
	coefficient lists (Section~\ref{sec:ladder}) --- together with the closed evaluation, in
	Section~\ref{sec:tail}, of the single tail coefficient the ladder isolates.

	The second contribution is the central tail expansion of Section~\ref{sec:tail}.  The one object the
	masses do not cover is the central tail probability at a bounded displacement, and it has a
	clean complete expansion: for $Lp+\delta\in\Z$,
	\[
		T_L(\delta;p):=2\Pr\{\Bin(L,p)\ge Lp+\delta\}-1
		\;\sim\;\frac{1}{\sqrt{2\pi Lpq}}\sum_{n\ge0}\frac{\Psi_n(\delta;p)}{L^{n}},
	\]
	with
	\[
	\begin{aligned}
		\Psi_0=-2B_1(\delta)-\frac{q-p}{3},
		\qquad
		\Psi_1&=\frac1{pq}\Bigl[\frac{B_3(\delta)}{3}
			+\frac{q-p}{2}B_2(\delta)\\
		&\quad+\frac{1-pq}{6}B_1(\delta)
			+\frac{(q-p)(2+pq)}{540}\Bigr].
	\end{aligned}
	\]
	Relative to the prefactor the expansion runs in \emph{integer} powers of $L^{-1}$ ---
	absolutely, only the odd half-powers $L^{-(2n+1)/2}$ occur --- and the coefficients follow
	the pattern of \cite{elezovic_mad}: Bernoulli polynomials of the displacement over powers of
	$pq$, odd-index polynomials free of $q-p$, even-index and constant terms carrying it.

	The third contribution is the beta-median application of Section~\ref{sec:median}.  Since
	$\Pr\{\Bin(L,p)\ge\nu\}=I_p(\nu,L-\nu+1)$, the tail $T_L$ vanishes exactly when $p$ is the
	\emph{median} of a beta distribution, and the expansion of $T_L$ becomes a two-term expansion
	of that median.  The leading coefficient vanishes at $\delta=(1+p)/3$, which is precisely
	the classical approximation $\mathrm{med}\approx(a-\tfrac13)/(a+b-\tfrac23)$; the
	first-order coefficient carries it one order further, and the resulting second-order term
	(Theorem~\ref{thm:beta-median}) appears to be new.  Its formal one-parameter degeneration
	$b\to\infty$ reproduces, through its first two terms, Choi's expansion $a-\tfrac13+\tfrac8{405a}+\cdots$ of the median of
	the gamma distribution \cite{choi1994}: the constant $\tfrac8{405}$ of the gamma median is
	obtained from the constant term $\tfrac{(q-p)(2+pq)}{540}$ of $\Psi_1$ under this degeneration.

	Consequently (Section~\ref{sec:consequences}) every odd absolute moment of the folded
	binomial --- hence the complete moment sequence, the skewness and the kurtosis of $Y$ ---
	admits a complete asymptotic expansion in integer powers of $N^{-1}$ with
	Bernoulli-polynomial coefficients, assembled from the masses of \cite{elezovic_mad} and the
	tail expansion of Section~\ref{sec:tail}.

	Throughout, $b(k;N,p)=\binom Nkp^kq^{N-k}$, $\mu_m=\E(X-Np)^m$,
	$\nu=\lceil Np\rceil$, $h=h_N=\nu-Np\in[0,1)$, and $B_n(x)$ are the Bernoulli
	polynomials in the convention $B_1(x)=x-\tfrac12$, so that the Bernoulli number is
		$B_1=B_1(0)=-\tfrac12$.

\section{The variance is free}\label{sec:variance}

	Folding at the mean is invisible to even moments; in particular $\E Y^{2}=Npq$ exactly.
	Combining with De~Moivre's identity $\E Y=2\nu q\,b(\nu;N,p)$:

\begin{proposition}\label{prop:var}
	For every $N$ and $p\in(0,1)$,
	\begin{equation}\label{eq:varclosed}
		\Var|X-Np|=Npq-4\,\nu^{2}q^{2}\,b(\nu;N,p)^{2},\qquad\nu=\lceil Np\rceil,
	\end{equation}
	and, with $\gamma_m(h;p)$ the multiplicative coefficients of the expansion of $\E Y$ in
	\cite[Thm 4.2]{elezovic_mad},
	\begin{equation}\label{eq:varexp}
		\Var|X-Np|
		=\Bigl(1-\frac2\pi\Bigr)Npq
		+\Bigl[\frac2\pi B_2(h_N)-\frac{pq}{3\pi}\Bigr]
		-\frac{2pq}{\pi}\,\frac{\gamma_1^{2}+2\gamma_2}{N}-\cdots,
	\end{equation}
	a complete expansion in integer powers of $N^{-1}$, uniformly for $p$ in compact subsets of
	$(0,1)$.
\end{proposition}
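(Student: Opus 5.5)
The exact formula \eqref{eq:varclosed} needs no new work. Since $Y^2=(X-Np)^2$ pointwise, $\E Y^2=\mu_2=Npq$. De~Moivre's identity gives $\E Y=2\nu q\,b(\nu;N,p)$, and $\Var Y=\E Y^2-(\E Y)^2$ yields \eqref{eq:varclosed}. If I want De~Moivre's identity self-contained, I would recall its telescoping proof. Put $d(k)=(k-Np)b(k;N,p)$. Then $\sum_{k\ge\nu}d(k)=\nu q\,b(\nu;N,p)$, which follows from $k\,b(k)=Np\,b(k-1;N-1,p)$ and summation by parts. Combined with $\sum_k d(k)=0$, this gives $\E|X-Np|=2\sum_{k\ge\nu}d(k)$.

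For \eqref{eq:varexp}, I invoke \cite[Thm 4.2]{elezovic_mad} in the multiplicative form
\[
	\E Y=\sqrt{\frac{2Npq}{\pi}}\Bigl(1+\frac{\gamma_1(h;p)}{N}+\frac{\gamma_2(h;p)}{N^2}+\cdots\Bigr),
\]
uniformly for $p$ in compact subsets of $(0,1)$. Squaring gives
\[
	(\E Y)^2=\frac{2Npq}{\pi}\Bigl(1+\frac{2\gamma_1}{N}+\frac{\gamma_1^2+2\gamma_2}{N^2}+\cdots\Bigr).
\]
This is again a complete expansion in integer powers of $N^{-1}$, since the product of two asymptotic series is an asymptotic series, with remainders controlled uniformly. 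Subtracting from $Npq$ gives $(1-\tfrac2\pi)Npq$ as the leading term. The constant term is $-\tfrac{4pq}{\pi}\gamma_1$, and the $N^{-1}$ term is $-\tfrac{2pq}{\pi}(\gamma_1^2+2\gamma_2)N^{-1}$, as displayed.

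The one step that needs checking is the constant term. It requires the explicit value of $\gamma_1$ from \cite{elezovic_mad}, which I expect to be
\[
	\gamma_1=-\frac{B_2(h)}{2pq}+\frac1{12},
\]
following the Bernoulli-polynomial pattern described there. With this value, $-\tfrac{4pq}{\pi}\gamma_1=\tfrac{2}{\pi}B_2(h)-\tfrac{pq}{3\pi}$, which matches the statement. So the main work is to read off $\gamma_1$ from the cited theorem, with the sign conventions $B_1(x)=x-\tfrac12$ and $h=\lceil Np\rceil-Np$, and confirm the identity.

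If the cited expansion were given additively rather than multiplicatively, I would first convert it. One factors out $\sqrt{2Npq/\pi}$ and divides termwise, and the same squaring argument then applies. Uniformity in $p$ passes through squaring because $\E Y\le\sqrt{Npq}$ is bounded relative to the prefactor.
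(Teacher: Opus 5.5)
Your proof is correct and follows the same route as the paper: $\Var Y=\E Y^{2}-(\E Y)^{2}$ with $\E Y^{2}=Npq$ and De~Moivre's identity give the closed form, and squaring the multiplicative expansion of \cite[Thm 4.2]{elezovic_mad} gives the series, whose constant term $-\tfrac{4pq}{\pi}\gamma_1$ you evaluate with $\gamma_1=\tfrac1{12}-\tfrac{B_2(h)}{2pq}$, the same value the paper uses. Your self-contained telescoping proof of De~Moivre's identity and your remark that uniformity survives squaring are both sound; the paper leaves these points implicit.
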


\begin{proof}
	\eqref{eq:varclosed} is $\Var Y=\E Y^{2}-(\E Y)^{2}$.  The expansion is the square of the
	cited one; the $O(1)$ coefficient is
	$-\tfrac{4pq}\pi \gamma_1=-\tfrac{4pq}\pi\bigl(\tfrac1{12}-\tfrac{B_2(h)}{2pq}\bigr)
	=\tfrac2\pi B_2(h)-\tfrac{pq}{3\pi}$.
\end{proof}

\begin{remark}
	The leading constant is the folded-Gaussian one, $\Var|Z|=1-2/\pi$; and the lattice
	oscillation enters the $O(1)$ term as $\tfrac2\pi B_2(h)$ with a $p$-\emph{free} weight ---
	cleaner than in the mean, where $B_2(h)$ is divided by $pq$.  Identity \eqref{eq:varclosed}
	is recorded, in the doubled normalisation $2|X-Np|$, in \cite[\S2]{elezovic_folded}; we
	include it here because the ladder below is precisely the statement that nothing after the
	first two moments is free any more.
\end{remark}

\section{The odd-moment ladder}\label{sec:ladder}

	Since $\E(X-Np)=0$,
	\begin{equation}\label{eq:oddsplit}
		\E\,Y^{m}=2S_m-\mu_m,
		\qquad
		S_m:=\sum_{k\ge\nu}(k-Np)^{m}\,b(k;N,p)
		\qquad(m\ \text{odd}),
	\end{equation}
	so everything reduces to the truncated central moments $S_m$.  The main tool is a single reduction step.

\begin{lemma}[Reduction step]\label{lem:reduction}
	Let $L\ge1$, $r\ge1$, $0\le\nu\le L$.  Then
	\begin{equation}\label{eq:reduction}
	\begin{aligned}
		\sum_{k\ge\nu}(k-Lp)^{r}\,b(k;L,p)
		&=(\nu-Lp)^{r-1}\,\nu q\,b(\nu;L,p)\\
		&\quad+Lpq\sum_{j\ge\nu}
			\Bigl[(u+q)^{r-1}-(u-p)^{r-1}\Bigr]b(j;L-1,p),
	\end{aligned}
	\end{equation}
	where $u:=j-(L-1)p$.  For $r=1$ the second sum vanishes and \eqref{eq:reduction} is the
	collapse identity $\sum_{k\ge\nu}(k-Lp)b(k;L,p)=\nu q\,b(\nu;L,p)$ of
	\cite[Thm 3.1]{elezovic_madfam}.
\end{lemma}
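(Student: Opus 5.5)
The plan is to run one step of Abel summation (summation by parts) against the telescoping identity behind De~Moivre's formula, and then size-bias the boundary-type terms down from $L$ to $L-1$ trials.

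\emph{Step 1: the telescoping identity.} Put $D_k:=kq\,b(k;L,p)$ for $0\le k\le L+1$, with $b(L+1;L,p)=0$. From the ratio $b(k+1;L,p)=b(k;L,p)\,\frac{(L-k)p}{(k+1)q}$ we get $D_{k+1}=(L-k)p\,b(k;L,p)$. Hence
\[
D_k-D_{k+1}=\bigl(kq-(L-k)p\bigr)b(k;L,p)=(k-Lp)\,b(k;L,p).
\]
Summing over $k\ge\nu$ gives the case $r=1$ directly, since the sum telescopes to $D_\nu=\nu q\,b(\nu;L,p)$ because $D_{L+1}=0$.

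\emph{Step 2: Abel summation.} For $r\ge1$, write $(k-Lp)^r b(k;L,p)=f(k)\,(D_k-D_{k+1})$ with $f(k):=(k-Lp)^{r-1}$. Summation by parts over $\nu\le k\le L$ gives
\[
\sum_{k\ge\nu}f(k)(D_k-D_{k+1})=f(\nu)D_\nu+\sum_{k\ge\nu+1}\bigl(f(k)-f(k-1)\bigr)D_k .
\]
The upper boundary term again vanishes because $D_{L+1}=0$. The first term on the right is exactly $(\nu-Lp)^{r-1}\nu q\,b(\nu;L,p)$.

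\emph{Step 3: size-biasing.} Use $k\binom Lk=L\binom{L-1}{k-1}$ to write
\[
D_k=kq\,b(k;L,p)=Lpq\,b(k-1;L-1,p).
\]
Then substitute $j=k-1\ge\nu$ and $u=j-(L-1)p$. Since
\[
j+1-Lp=u+q\qquad\text{and}\qquad j-Lp=u-p,
\]
we have $f(j+1)-f(j)=(u+q)^{r-1}-(u-p)^{r-1}$. This yields the second term of \eqref{eq:reduction}. For $r=1$ the bracket is identically zero, which recovers the collapse identity.

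There is no real obstacle here. The only points that need care are the boundary terms: $D_{L+1}=0$ at the top, and the convention that $b(j;L-1,p)=0$ for $j>L-1$, so the $j$-sum runs over its natural range. The edge cases $\nu=0$ and $\nu=L$ are covered by the same computation, using $D_0=0$ where needed.
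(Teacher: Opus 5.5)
Your proposal is correct and follows the paper's proof essentially step for step. It uses the same telescoping function $kq\,b(k;L,p)$, Abel summation with the boundary at infinity vanishing, and the size-bias identity $kq\,b(k;L,p)=Lpq\,b(k-1;L-1,p)$ with the shift $j=k-1$, $u=j-(L-1)p$; your explicit handling of $D_{L+1}=0$ and of the edge cases $\nu=0$, $\nu=L$ is only slightly more careful than the paper's version.
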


\begin{proof}
	Let $F(k):=kq\,b(k;L,p)$.  The telescoping identity
	$(k-Lp)\,b(k;L,p)=F(k)-F(k+1)$ holds for all $k$ (divide by $b(k)$ and use
	$b(k+1)/b(k)=(L-k)p/((k+1)q)$).  Abel summation of
	$\sum_{k\ge\nu}(k-Lp)^{r-1}\bigl[F(k)-F(k+1)\bigr]$, with the boundary at infinity
	vanishing, gives
	\[
	\begin{aligned}
		\sum_{k\ge\nu}(k-Lp)^{r}b(k;L,p)
		&=(\nu-Lp)^{r-1}F(\nu)\\
		&\quad+\sum_{k\ge\nu+1}
			\Bigl[(k-Lp)^{r-1}-(k-1-Lp)^{r-1}\Bigr]F(k).
	\end{aligned}
	\]
	Now $F(k)=kq\,b(k;L,p)=Lpq\,b(k-1;L-1,p)$ by the size-bias identity
	$k\binom Lk=L\binom{L-1}{k-1}$.  Substituting $j=k-1$ and $u=j-(L-1)p$, so that
	$k-Lp=u+q$ and $k-1-Lp=u-p$, yields \eqref{eq:reduction}.
\end{proof}

\begin{theorem}[The ladder]\label{thm:ladder}
	Let $m=2J+1$ be odd, $\nu=\lceil Np\rceil$, and $N\ge\nu+J$ (so that $\nu\le N-i$ at every
	level $0\le i\le J$; automatic for large $N$, and for smaller $N$ the identity persists under
	the convention that out-of-range masses and tails are read as zero, since $\nu\le N$ always).
	There exist polynomials
	$\kappa_i\in\mathbb Q[N,p,h]$ and $c_i\in\mathbb Q[N,p]$ --- the latter \emph{independent
	of the cut} $\nu$ --- such that
	\begin{equation}\label{eq:ladder}
		\E\,Y^{m}
		=2\sum_{i=0}^{J}\kappa_i\;\nu q\,b(\nu;N-i,p)
		\;+\;\sum_{i=1}^{J}c_i\,\bigl[2\Pr\{\Bin(N-i,p)\ge\nu\}-1\bigr].
	\end{equation}
	Moreover, the tail coefficients satisfy the sum rule
	\[
		\sum_{i=1}^{J}c_i=\mu_m .
	\]
	Every $c_i$ is divisible by $q-p$ as a polynomial in $p$.  Consequently, at $p=\tfrac12$
	the tail part of \eqref{eq:ladder} vanishes identically and
	\[
		\E\,\Bigl|X-\tfrac N2\Bigr|^{m}
		=2\sum_{i=0}^{J}\kappa_i\,\nu q\,b\Bigl(\nu;N-i,\tfrac12\Bigr):
	\]
	the whole odd ladder of the symmetric binomial consists of central masses.
\end{theorem}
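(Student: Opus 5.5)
The plan is to iterate Lemma~\ref{lem:reduction} on the truncated sum $S_m$ of \eqref{eq:oddsplit}, keeping track of a polynomial in the centred variable at each level. At level $i$ (binomial index $L=N-i$) I would carry an expression $\sum_{k\ge\nu}P_i(k-Lp)\,b(k;L,p)$, where $P_i$ is a polynomial in one variable whose coefficients lie in $\mathbb Q[N,p]$. The recursion starts from $P_0(u)=u^m$.

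\textbf{The ladder identity.} I would split $P_i$ by monomials $u^s$.
\begin{itemize}
\item For $s\ge1$, Lemma~\ref{lem:reduction} with $r=s$ produces a boundary term $(\nu-Lp)^{s-1}\nu q\,b(\nu;L,p)$. It also produces a contribution $Lpq\bigl[(u+q)^{s-1}-(u-p)^{s-1}\bigr]$ to $P_{i+1}$.
\item Since $\nu-(N-i)p=h+ip$, every boundary factor is a polynomial in $p,h$. This yields the $\kappa_i\in\mathbb Q[N,p,h]$.
\item The monomial $s=0$ is not reduced. It contributes $d_i\Pr\{\Bin(N-i,p)\ge\nu\}$, where $d_i=P_i(0)$ is its constant coefficient.
\item The bracket $(u+q)^{s-1}-(u-p)^{s-1}$ has degree $s-2$. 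So $\deg P_i\le m-2i$, constant terms can arise only for $1\le i\le J$, and the recursion stops by level $J$.
\item The coefficients of $P_{i+1}$ are built from those of $P_i$ by a $\nu$-independent polynomial rule. Hence the $d_i$ do not depend on the cut, and only the $\kappa_i$ see $\nu$ through $h$.
\item The hypothesis $N\ge\nu+J$ guarantees $\nu\le N-i$, as the lemma requires at every level.
\end{itemize}
Writing $2S_m-\mu_m$ and using $2d_i\Pr\{\cdot\}=d_i\,[2\Pr\{\cdot\}-1]+d_i$ gives \eqref{eq:ladder} with $c_i=d_i$, plus a leftover constant $\sum_i d_i-\mu_m$.

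\textbf{Sum rule.} The identical reduction holds at the cut $\nu=0$, and the $d_i$ are the same there. At $\nu=0$ every boundary term carries the factor $\nu=0$ and vanishes, and every tail probability equals $1$. The left side is then the full central moment $\mu_m$, so $\sum_i d_i=\mu_m$. The leftover constant therefore cancels, which proves both \eqref{eq:ladder} and the sum rule.

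\textbf{Divisibility by $q-p$ (main obstacle).} Let $\sigma$ act on polynomials $P(u)$ with coefficients in $\mathbb Q[N,p]$ by swapping $p\leftrightarrow q$ (so $p\mapsto 1-p$) and sending $u\mapsto -u$. Say $P$ has parity $\varepsilon$ if $\sigma P=\varepsilon P$.
\begin{itemize}
\item The one-step operator $u^s\mapsto Lpq\bigl[(u+q)^{s-1}-(u-p)^{s-1}\bigr]$, applied to $u^s$ (which has parity $(-1)^s$), gives
\[
(-u+p)^{s-1}-(-u-q)^{s-1}=(-1)^{s}\bigl[(u+q)^{s-1}-(u-p)^{s-1}\bigr],
\]
again of parity $(-1)^s$. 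The factor $Lpq$ is $\sigma$-invariant.
\item Dropping the constant monomial is compatible with $\sigma$, since $\sigma$ preserves degree in $u$.
\item Because $\sigma$ is linear over $\mathbb Q[N]$, the whole reduction therefore preserves parity.
\item Starting from $u^m$ with $m$ odd, every $P_i$ has parity $-1$. In particular its constant term satisfies $d_i(N,1-p)=-d_i(N,p)$.
\item An antisymmetric polynomial in $p$ vanishes at $p=\tfrac12$, so it is divisible by $1-2p=q-p$.
\end{itemize}
The step I expect to need the most care is this parity bookkeeping: checking that $\sigma$ genuinely commutes with the reduction at every level, and that nothing hidden in the coefficients (for example a dependence on $L$) breaks the symmetry. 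It does not, because $L=N-i$ involves $N$ only.

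\textbf{Symmetric case.} At $p=\tfrac12$ all $c_i$ vanish, and the displayed symmetric formula is \eqref{eq:ladder} with the tail part removed.
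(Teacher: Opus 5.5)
Your proof is correct. The construction of the ladder and the sum rule are the same as in the paper; in particular, you evaluate the same cut-independent recursion at $\nu=0$. The genuine difference is the divisibility step.

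\textbf{The paper's route.} It specialises to $p=q=\tfrac12$ at the outset and tracks parity in $u$ alone. The bracket $(u+\tfrac12)^{r-1}-(u-\tfrac12)^{r-1}$ is odd for odd $r$, so an odd polynomial never generates a constant term. Hence every $c_i$ vanishes at $p=\tfrac12$, and divisibility by $2p-1$ follows.

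\textbf{Your route.} You keep $p$ general and use the involution $\sigma\colon(p,u)\mapsto(1-p,-u)$. This costs one extra verification: the step operator is $\mathbb Q[N,p]$-linear while $\sigma$ twists the coefficients, so you must show the two commute. They do. Check it on the monomials $u^s$, then use that $\sigma$ is a ring automorphism, $\sigma(fP)=\sigma(f)\,\sigma(P)$. That is the precise content behind your phrase ``linear over $\mathbb Q[N]$'', which on its own is not quite the right justification.

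\textbf{What each buys.} In exchange for that check you get more than the theorem asks. You obtain $c_i(N,1-p)=-c_i(N,p)$ identically. Hence $c_i/(q-p)$ is invariant under $p\leftrightarrow q$, and so is a polynomial in $N$ and $pq$. This is exactly the shape of the coefficients listed in Remark~\ref{rem:ladder-shape}. The paper's specialisation yields only the vanishing at $p=\tfrac12$, but with less bookkeeping.

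\textbf{A point in your favour.} Your bound $\deg P_i\le m-2i$ correctly leaves $P_J$ linear rather than constant; for $m=3$, for example, $P_1=Npq(2u+q-p)$. Its linear part is what supplies the last mass coefficient $\kappa_J$, via the $r=1$ collapse. The paper's sentence that $P^{(J)}$ is constant is loose on this point.
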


\begin{proof}
	Apply Lemma~\ref{lem:reduction} to $S_m$ and iterate: each application replaces a sum of
	the form $\sum_{k\ge\nu}P(k-Lp)\,b(k;L,p)$, $P$ a polynomial, by boundary masses
	$P_1(\nu-Lp)\,\nu q\,b(\nu;L,p)$ and sums of the same form at level $L-1$ with the
	polynomial $u\mapsto\sum$ of brackets $(u+q)^{r-1}-(u-p)^{r-1}$, of degree two less.
	Degree-zero (constant) terms of the current polynomial contribute
	$c\cdot\Pr\{\Bin(L,p)\ge\nu\}$ and are set aside.  Concretely, writing the polynomial carried at level $N-i$ as
		$P^{(i)}(u)=\sum_{r\ge0}a^{(i)}_r u^r$ (so $P^{(0)}(u)=u^m$ at level $N$), one reduction step
		produces
		\[
			P^{(i+1)}(u)=(N-i)pq\sum_{r\ge1}a^{(i)}_r\bigl[(u+q)^{r-1}-(u-p)^{r-1}\bigr]
		\]
		at level $N-i-1$ (of degree two less), contributes the constant term $a^{(i)}_0$ to the tail
		coefficient $c_i$, and adds $\sum_{r\ge1}a^{(i)}_r(h+ip)^{r-1}$ to the mass coefficient
		$\kappa_i$, the boundary being evaluated at $u=\nu-(N-i)p=h+ip$.  The degree drops by two at
		each step, so $P^{(J)}$ is constant and the recursion halts.  After at most $J$ steps every summand
	has been converted, giving
	\begin{equation}\label{eq:Sm-form}
		S_m=\sum_{i=0}^{J}\kappa_i\,\nu q\,b(\nu;N-i,p)+\sum_{i=1}^{J}c_i\Pr\{\Bin(N-i,p)\ge\nu\},
	\end{equation}
	with $\kappa_i$ collecting the boundary evaluations $(\nu-(N-i)p)^{\,\cdot}=(h+ip)^{\,\cdot}$
	and $c_i$ the constant terms produced at level $N-i$; the latter are built solely from the
	reduction's polynomial algebra and are therefore independent of $\nu$.  This recursion is
	effective: it terminates in $J$ steps and outputs each $\kappa_i$ and $c_i$ as an explicit
	polynomial (see \eqref{eq:m3} and Remark~\ref{rem:ladder-shape} for $m\le7$).  Substituting
	\eqref{eq:Sm-form} into \eqref{eq:oddsplit} gives \eqref{eq:ladder} \emph{provided}
	$\sum_ic_i=\mu_m$, which is (i); we prove (i) directly.

	\emph{(i)}  Evaluate \eqref{eq:Sm-form} at the cut $\nu=0$.  The left side becomes the full
	central moment: $S_m|_{\nu=0}=\sum_{k\ge0}(k-Np)^mb(k;N,p)=\mu_m$.  On the right, every
	boundary mass carries the factor $\nu q\,b(\nu;\cdot)$, which vanishes at $\nu=0$; and every
	tail becomes $\Pr\{\Bin\ge0\}=1$.  Since the $c_i$ do not depend on $\nu$, this reads
	$\mu_m=\sum_ic_i$.  Then
	$\E Y^m=2S_m-\mu_m=2\sum\kappa_i\nu qb(\nu;\cdot)+\sum c_i[2\Pr\{\cdot\}-1]$, which is
	\eqref{eq:ladder}.

	\emph{(ii)}  Work at $p=q=\tfrac12$ and track the parity in $u$ of the polynomials produced
	by the reduction.  The starting polynomial is $y^{m}$ with $m$ odd.  One reduction step
	sends the monomial $y^{r}$ to the bracket $(u+\tfrac12)^{r-1}-(u-\tfrac12)^{r-1}$; under
	$u\mapsto-u$ this bracket is multiplied by $(-1)^{r}$, so for \emph{odd} $r$ it is an
	\emph{odd} polynomial in $u$.  An odd polynomial contains only odd powers of $u$; each of
	its monomials $u^{s}$ ($s$ odd) again produces an odd bracket at the next level; and an odd
	polynomial has no constant term.  By induction, at $p=\tfrac12$ \emph{no constant term is
	ever produced}: every $c_i$ vanishes at $p=\tfrac12$.  Since $c_i\in\mathbb Q[N,p]$,
	vanishing at $p=\tfrac12$ for all $N$ forces divisibility by $2p-1=p-q$.
\end{proof}

	Carrying out one reduction step explicitly for $m=3$ gives the exact third-order analogue
	of De~Moivre's formula.

\begin{theorem}[Third absolute moment]\label{thm:m3}
	With $\nu=\lceil Np\rceil$ and $h=\nu-Np$,
	\begin{equation}\label{eq:m3}
	\begin{aligned}
		\E\,|X-Np|^{3}
		&=2h^{2}\,\nu q\,b(\nu;N,p)
		+4Npq\cdot\nu q\,b(\nu;N-1,p)\\
		&\quad+Npq\,(q-p)\bigl[2\Pr\{\Bin(N-1,p)\ge\nu\}-1\bigr].
	\end{aligned}
	\end{equation}
\end{theorem}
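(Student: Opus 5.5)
We specialise the proof of Theorem~\ref{thm:ladder} to $m=3$, $J=1$, and run the recursion by hand. Start from \eqref{eq:oddsplit}, $\E Y^3=2S_3-\mu_3$, with $\mu_3=Npq(q-p)$. We apply Lemma~\ref{lem:reduction} with $L=N$, $r=3$. The boundary term is $(\nu-Np)^2\,\nu q\,b(\nu;N,p)=h^2\,\nu q\,b(\nu;N,p)$. The bracket is
\[
(u+q)^2-(u-p)^2=2u(p+q)+q^2-p^2=2u+(q-p),
\]
so that
\[
S_3=h^2\nu q\,b(\nu;N,p)+Npq\sum_{j\ge\nu}\bigl[2u+(q-p)\bigr]b(j;N-1,p),\qquad u=j-(N-1)p .
\]

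The next step splits this remaining sum into two pieces. The linear piece is handled by the collapse identity, i.e.\ the case $r=1$ of Lemma~\ref{lem:reduction} at level $L=N-1$, which gives $\sum_{j\ge\nu}u\,b(j;N-1,p)=\nu q\,b(\nu;N-1,p)$. The constant piece is $(q-p)\Pr\{\Bin(N-1,p)\ge\nu\}$. Therefore
\[
S_3=h^2\nu q\,b(\nu;N,p)+2Npq\,\nu q\,b(\nu;N-1,p)+Npq(q-p)\Pr\{\Bin(N-1,p)\ge\nu\}.
\]

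Doubling and subtracting $\mu_3=Npq(q-p)$ turns the tail into $Npq(q-p)\bigl[2\Pr\{\cdot\}-1\bigr]$, which is \eqref{eq:m3}. This also confirms the sum rule $c_1=\mu_3$ and the divisibility by $q-p$ from the theorem.

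The only point needing care is the range condition $\nu\le N-1$ required to apply the lemma at level $N-1$. If $\nu=N$, which happens only when $Np>N-1$, then $b(\nu;N-1,p)$ and the tail are read as zero, and the sums above are empty. We check this degenerate case directly under the stated convention: with $\nu=N$, the tail bracket becomes $-1$. We expect this to be consistent with the convention of Theorem~\ref{thm:ladder} and do not anticipate any other obstacle. Everything else is the elementary computation of the bracket and the value of $\mu_3$.
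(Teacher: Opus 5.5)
Your proposal is correct and is exactly the paper's proof: one application of Lemma~\ref{lem:reduction} with $r=3$, $L=N$, the bracket $2u+(q-p)$, the collapse identity at level $N-1$ for the linear part, and $\E Y^3=2S_3-\mu_3$ with $\mu_3=Npq(q-p)$. Your closing concern about $\nu=N$ needs no separate check and no convention, because $b(j;N-1,p)=0$ for all $j\ge N$. So the collapse identity at level $N-1$ reads $0=0$, the tail probability is genuinely zero, and your computation already covers that case verbatim.
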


\begin{proof}
	Lemma~\ref{lem:reduction} with $r=3$, $L=N$:
	\[
	\begin{aligned}
		S_3=(\nu-Np)^{2}\nu q\,b(\nu;N,p)
		&+Npq\sum_{j\ge\nu}\bigl[(u+q)^{2}-(u-p)^{2}\bigr]b(j;N-1,p),
	\end{aligned}
	\]
	and $(u+q)^{2}-(u-p)^{2}=(2u+q-p)(q+p)=2u+q-p$.  The $2u$-part is twice the collapse
	identity at level $N-1$ ($r=1$), giving $2\nu q\,b(\nu;N-1,p)\cdot$--- precisely,
	$\sum_{j\ge\nu}u\,b(j;N-1,p)=\nu q\,b(\nu;N-1,p)$ --- and the constant part gives
	$(q-p)\Pr\{\Bin(N-1,p)\ge\nu\}$.  Hence
	\[
	\begin{aligned}
		S_3&=h^{2}\nu q\,b(\nu;N,p)+2Npq\,\nu q\,b(\nu;N-1,p)\\
		&\quad+Npq(q-p)\Pr\{\Bin(N-1,p)\ge\nu\},
	\end{aligned}
	\]
	and $\E Y^{3}=2S_3-\mu_3$ with $\mu_3=Npq(q-p)$ gives \eqref{eq:m3}.
\end{proof}

\begin{remark}\label{rem:ladder-shape}
	The first two terms of \eqref{eq:m3} are masses at the mode-adjacent point, for $N$ and
	$N-1$; \cite[Thm 3.1]{elezovic_mad} expands them completely, and the results are again
	Bernoulli/Appell series in $h$.  The third term is not a mass: it is a \emph{central tail
	probability}, and it is the single new object of the ladder.  Its coefficient
	$Npq(q-p)=\mu_3$ illustrates both structural laws at once ($J=1$: the sum rule is
	$c_1=\mu_3$, and $q-p$ divides visibly).  For $m=5,7$ the tail coefficients are
	\[
		\begin{aligned}
		m=5:\quad&
			c_1=Npq(q-p)(1-2pq),\\
			&c_2=10N(N-1)p^{2}q^{2}(q-p);\\
		m=7:\quad&
			c_1=Npq(q-p)(1-pq)(1-3pq),\\
			&c_2=7N(N-1)p^{2}q^{2}(q-p)(8-21pq),\\
			&c_3=105\,N(N-1)(N-2)\,p^{3}q^{3}(q-p),
		\end{aligned}
	\]
	the deepest coefficient carrying the leading $N$-power of $\mu_m$ and the sum rule
	accounting for the rest.
\end{remark}

\section{The central tail expansion}\label{sec:tail}

	By Theorem~\ref{thm:ladder} the only object not supplied by the local expansions of
	\cite{elezovic_mad} is the central tail probability at a bounded displacement.  Fix
	$p\in(0,1)$ and a bounded $\delta$ with $Lp+\delta\in\Z$ (in \eqref{eq:ladder},
	$\delta=h+ip$ at level $L=N-i$), and set
	\[
		T_L(\delta;p):=2\Pr\{\Bin(L,p)\ge Lp+\delta\}-1 .
	\]

\begin{theorem}\label{thm:tail}
	As $L\to\infty$, uniformly for $p$ in compact subsets of $(0,1)$ and bounded $\delta$ with
	$Lp+\delta\in\Z$,
	\begin{equation}\label{eq:tail}
		T_L(\delta;p)
		\sim\frac{1}{\sqrt{2\pi Lpq}}\sum_{n\ge0}\frac{\Psi_n(\delta;p)}{L^{n}},
	\end{equation}
	with
	\begin{align}
		\Psi_0(\delta;p)&=-2B_1(\delta)-\frac{q-p}{3},\label{eq:psi0}\\
		\Psi_1(\delta;p)&=\frac1{pq}\Bigl[\frac{B_3(\delta)}{3}
			+\frac{q-p}{2}\,B_2(\delta)
			+\frac{1-pq}{6}\,B_1(\delta)
			+\frac{(q-p)(2+pq)}{540}\Bigr].\label{eq:psi1}
	\end{align}
	Relative to the prefactor $(2\pi Lpq)^{-1/2}$ the expansion proceeds in integer powers of
	$L^{-1}$: absolutely, only the odd half-powers $L^{-(2n+1)/2}$ occur.
\end{theorem}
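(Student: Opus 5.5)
Our plan is to write $T_L(\delta;p)=\Pr\{X\ge\nu\}-\Pr\{X<\nu\}$ with $X\sim\Bin(L,p)$ and $\nu=Lp+\delta$. We then expand this through an Edgeworth-type lattice expansion of the distribution function. The cleanest route avoids a full Edgeworth series and uses the symmetry of the continuous part. Set $\sigma^2=Lpq$. The standard lattice Edgeworth expansion (Esseen's theorem with periodic Bernoulli corrections) states that
\[
\Pr\{X<\nu\}=G(x)+\sum_{k\ge1}\frac{(-1)^k}{k!}\,\sigma^{-k}\,B_k(\delta)\,G^{(k)}(x)\cdot(\text{sign conventions}),
\]
where $x=\delta/\sigma$ and $G(x)=\Phi(x)+\sum_{s}\sigma^{-s}P_s(-\Phi)(x)$ is the continuous Edgeworth series. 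The cumulants of $X$ are $L\kappa_r(p)$, with $\kappa_3=pq(q-p)$ and $\kappa_4=pq(1-6pq)$. Because $\nu$ is an integer, the Euler--Maclaurin term lives at the fractional part $\nu-(\lfloor Lp\rfloor)$. Under the hypothesis $Lp+\delta\in\Z$ this reduces to Bernoulli polynomials in $\delta$ itself: $B_k(\delta)$ and $B_k(\{\delta\})$ differ by lower-order polynomial terms when $\delta$ is a bounded shift, and the combination is periodic in the appropriate sense. Rather than rely on that, we would derive everything directly from the Euler--Maclaurin summation of the local expansion.

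Concretely, the steps are as follows.
\begin{enumerate}
\item Start from the local expansion of $b(k;L,p)$ at $k=Lp+y$, namely
\[
b=\frac{e^{-y^2/2\sigma^2}}{\sqrt{2\pi}\sigma}\bigl(1+\sigma^{-1}Q_1+\sigma^{-2}Q_2+\cdots\bigr),
\]
with $Q_1=\frac{q-p}{6\sqrt{pq}}(t^3-3t)$, $t=y/\sigma$, and so on.
\item Write $T_L=\sum_{k\ge\nu}b-\sum_{k<\nu}b$. Each half-sum is treated by Euler--Maclaurin with base point $\delta$. This gives
\[
\sum_{k\ge\nu}f(k)=\int_{\nu}^\infty f-\sum_{j\ge1}\frac{B_j(0)}{j!}\,\text{shifted}\ldots,
\]
or more cleanly $\sum_{k\ge\nu}f(k)=\int_\nu^\infty f+\sum_{j\ge1}\frac{(-1)^{j}B_j}{j!}f^{(j-1)}(\nu)$.
\item Re-expand the boundary values about $Lp$ and the integral about $\int_{Lp}$. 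Taylor-shifting the base point from $\nu$ to $Lp$ converts the Bernoulli numbers into Bernoulli polynomials $B_j(\delta)$, by the Appell property $B_j(x+\delta)=\sum\binom jiB_i(x)\delta^{j-i}$.
\item In the difference of the two half-sums, the Gaussian integral parts cancel to leading order by symmetry. The odd Gaussian derivatives then contribute the $B_{2k+1}(\delta)$ terms, free of $q-p$. The skewness terms $Q_1,Q_3,\dots$ contribute the $(q-p)$-carrying constants and the even $B_{2k}(\delta)$.
\end{enumerate}

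The parity claim follows from bookkeeping. Each derivative of the Gaussian at $0$ brings a factor $\sigma^{-1}$, and only odd total orders survive in $T_L$ once the even contributions cancel between the two half-sums. Hence relative to $\sigma^{-1}$ only powers $\sigma^{-2n}=(Lpq)^{-n}$ remain. Uniformity in $p$ on compacts follows from uniform error bounds in the local expansion, which hold because the local expansion is uniform there and the Gaussian tails are summable. The tails beyond $|y|>L^{2/3}$ are exponentially small by Chernoff bounds.

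The main obstacle is the explicit computation of $\Psi_1$, in particular the constant $\frac{(q-p)(2+pq)}{540}$. This needs $Q_1,Q_2,Q_3$ in full, including the $\kappa_3\kappa_4$ and $\kappa_5$ contributions, and a careful collection of the $\sigma^{-3}$ terms. As a check we would first verify $\Psi_0$: the constant $-\frac{q-p}{3}$ comes from $2\int_0^\infty\phi(t)\,Q_1\,dt=2\cdot\frac{q-p}{6\sqrt{pq}}\cdot(-\frac{2}{\sqrt{2\pi}})\cdot\sqrt{2\pi}$ suitably normalised, and $-2B_1(\delta)$ comes from the boundary term $f(\nu)$. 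We would then cross-check $\Psi_1$ at $p=\tfrac12$, where the sum is exactly computable by symmetry, and against the known Cornish--Fisher median expansion $\nu\approx Lp+\frac{1+p}{3}$, which must make $\Psi_0$ vanish.
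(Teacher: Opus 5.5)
Your route is correct and genuinely different from the paper's. The paper never sums over the lattice. It writes the tail as the incomplete beta integral $I_p(A,B)$, runs Laplace's method about the peak $x_*$ (which sits $O(L^{-1})$ from the endpoint $p$), and gets the parity from the reflection $(\delta,p)\mapsto(1-\delta,q)$, under which $T_L$ is odd. It recognises the Bernoulli form of $\Psi_1$ only after collecting powers of $\delta$ and re-expanding $S=L-1$ in $L$.

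You instead sum the local Edgeworth expansion $g(y)=\sigma^{-1}\phi(t)\bigl(1+\sum_{s\ge1}\sigma^{-s}P_s(t)\bigr)$, $t=y/\sigma$, by the shifted Euler--Maclaurin formula. Done correctly, this gives, for any bounded $\delta$,
\[
	T_L\sim\int_{-\infty}^{\infty}\operatorname{sgn}(y)\,g(y)\,dy-2\sum_{n\ge1}\frac{B_n(\delta)}{n!}\,g^{(n-1)}(0).
\]
This buys three things:
\begin{itemize}
\item The Bernoulli polynomials are structural rather than observed.
\item The expansion comes out automatically in powers of $\sigma^{-2}=(Lpq)^{-1}$.
\item The parity rule of Remark~\ref{rem:grammar} is proved at every order. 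It follows from the parity of the $P_s$ together with the factor $q-p$ in the odd cumulants, whereas the paper only exhibits it for $n=0,1$.
\end{itemize}
The computation of $\Psi_1$, which you flag as the obstacle and leave undone, is short in your framework. Put $\gamma_3=q-p$, $\gamma_4=1-6pq$, $\gamma_5=(q-p)(1-12pq)$. The four quantities $\phi''(0)=-\phi(0)$, $(\phi P_1)'(0)=-\tfrac{\gamma_3}{2}\phi(0)$, $P_2(0)=\tfrac{\gamma_4}{8}-\tfrac{5\gamma_3^2}{24}$ and $2\int_0^\infty\phi P_3\,dt=2\phi(0)\bigl[\tfrac{\gamma_5}{40}-\tfrac{5\gamma_3\gamma_4}{48}+\tfrac{35\gamma_3^3}{432}\bigr]$ make the $\sigma^{-3}$ coefficient exactly $\phi(0)$ times the bracket in \eqref{eq:psi1}. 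The last of them evaluates to $\tfrac{(q-p)(2+pq)}{540}\phi(0)$.

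What the paper's route buys in exchange is that it never uses integrality of $A$. It therefore yields the non-lattice form of Remark~\ref{rem:nonlattice} at no extra cost, and Theorem~\ref{thm:beta-median} needs that form. A lattice summation does not supply it directly.

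There are three slips to correct when you write it out.
\begin{enumerate}
\item Your ``cleaner'' Euler--Maclaurin formula has the wrong sign for $j\ge2$. The correct form is $\sum_{k\ge\nu}f(k)=\int_\nu^\infty f-\sum_{j\ge1}\frac{B_j}{j!}f^{(j-1)}(\nu)$ with $B_1=-\tfrac12$, as in your first version. Only with this sign does the Appell shift produce $B_n(\delta)$.
\item The boundary terms do not cancel between the two half-sums; they double, since the lower sum contributes $+\sum_n\frac{B_n(\delta)}{n!}g^{(n-1)}(0)$. The even orders vanish for a different reason: $\phi P_s$ has parity $(-1)^s$. Hence $\int\operatorname{sgn}(t)\,\phi P_s\,dt=0$ for even $s$, and $(\phi P_s)^{(n-1)}(0)=0$ unless $s+n$ is odd.
\item With prefactor $\sigma^{-1}$ one has $P_1=\frac{q-p}{6}\mathrm{He}_3$; your $1/\sqrt{pq}$ belongs with $L^{-1/2}$. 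Also $\int_0^\infty\phi(t)(t^3-3t)\,dt=-\phi(0)$, not $-2\phi(0)$. With these corrections the $-\frac{q-p}{3}$ in $\Psi_0$ comes out exactly.
\end{enumerate}
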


\begin{proof}
	\emph{Setup.}  With $A=Lp+\delta$ and $B=Lq-\delta+1$, so that $A+B=L+1$,
	\[
		\Pr\{\Bin(L,p)\ge A\}=I_p(A,B)
		=\frac{1}{\mathrm B(A,B)}\int_0^p x^{A-1}(1-x)^{B-1}\,dx ,
	\]
	the regularised incomplete beta function.  Write $\psi(x):=(A-1)\log x+(B-1)\log(1-x)$ and
	$S:=A+B-2=L-1$.  The integrand peaks at
	\[
		x_*=\frac{A-1}{S},\qquad
		p-x_*=\frac{q-\delta}{S},\qquad
		-\frac1{\psi''(x_*)}=\frac{x_*(1-x_*)}{S}=:v ,
	\]
	the second identity by $pS-(A-1)=pL-p-Lp-\delta+1=q-\delta$.  Note
	$x_*=p+O(L^{-1})$ and $p-x_*=O(L^{-1})$: the endpoint of integration sits within
	$O(L^{-1})$ of the peak, which is what makes the expansion central rather than of
	large-deviation type.  Using $A-1=Sx_*$ and $B-1=S(1-x_*)$, the derivatives have the closed
	forms
	\[
		\psi^{(j)}(x_*)=(j-1)!\,S\,\Bigl[(-1)^{j-1}x_*^{\,1-j}-(1-x_*)^{1-j}\Bigr],
		\qquad j\ge2 .
	\]

	\emph{Standardisation and the shape of the expansion.}  Substituting $x=x_*+\sqrt v\,z$,
	\[
		I_p(A,B)=\frac{\displaystyle\int_{-\infty}^{w}e^{-z^{2}/2}R(z)\,dz}
		{\displaystyle\int_{-\infty}^{\infty}e^{-z^{2}/2}R(z)\,dz}
		\Bigl(1+O(e^{-cL})\Bigr),
		\qquad
		w:=\frac{p-x_*}{\sqrt v}=\frac{q-\delta}{S\sqrt v},
	\]
	where $R(z)=\exp\bigl(\sum_{j\ge3}\tfrac{\psi^{(j)}(x_*)}{j!}v^{j/2}z^{j}\bigr)$; each
	coefficient $d_j:=\tfrac{\psi^{(j)}(x_*)}{j!}v^{j/2}$ is $O(S^{1-j/2})$, and
	$w=O(L^{-1/2})$.  Expanding $R$ and integrating term by term, every term is a product of
	powers of $w$ (each $O(L^{-1/2})$) and of the $d_j$; a term contributes
	$O(L^{-(k+1)/2})$ to $T_L$ with $k$ = (number of $w$-factors) $+\sum_j(j-2)$(multiplicity
	of $d_j$).  Terms of even total weight cancel between the half-line integral and the normalisation, by a
		reflection.  Under $(\delta,p)\mapsto(1-\delta,q)$ the roles of $A-1$ and $B-1$ are exchanged,
		so the peak $x_*=(A-1)/S$ maps to $1-x_*$; hence $v\mapsto v$, $w\mapsto-w$, and, from the
		closed form of $\psi^{(j)}(x_*)$ above, $\psi^{(j)}(x_*)\mapsto(-1)^j\psi^{(j)}(x_*)$, so
		$d_j\mapsto(-1)^jd_j$.  The tail is odd under the same reflection,
		$T_L(1-\delta;q)=-T_L(\delta;p)$, because $\Bin(L,q)$ and $L-\Bin(L,p)$ have the same law.
		Now $2\Phi(w)-1$ and the odd-Hermite corrections are jointly odd in $(w,d_3,d_5,\dots)$, so a
		term of even total weight is invariant under the reflection while $T_L$ changes sign; every
		such term must therefore vanish.  Hence only odd half-powers of $L$ survive, which is the
		stated shape.

	\emph{The leading coefficient.}  To the first two orders,
	\[
		T_L=2\Phi(w)-1+2d_3\int_{-\infty}^{0}z^{3}\varphi(z)\,dz+O(L^{-3/2})
		=\sqrt{\frac2\pi}\,w-\frac{4\,d_3}{\sqrt{2\pi}}+O(L^{-3/2}),
	\]
	using $\int_{-\infty}^{0}z^{3}\varphi(z)\,dz=-2\varphi(0)$ and $\Phi(w)-\tfrac12
	=\varphi(0)w+O(w^{3})$.  Now $w=(q-\delta)/\sqrt{Spq}\,\bigl(1+O(L^{-1})\bigr)$ and
	\[
	\begin{aligned}
		d_3=\frac{\psi'''(x_*)}{6}v^{3/2}
		&=\frac{2S\bigl[x_*^{-2}-(1-x_*)^{-2}\bigr]}{6}
			\Bigl(\frac{x_*(1-x_*)}S\Bigr)^{3/2}\\
		&=\frac{1-2x_*}{3\sqrt{S\,x_*(1-x_*)}}
		=\frac{q-p}{3\sqrt{Lpq}}\bigl(1+O(L^{-1})\bigr),
	\end{aligned}
	\]
	so
	\[
		T_L=\frac{1}{\sqrt{2\pi Lpq}}
		\Bigl[2(q-\delta)-\frac{4(q-p)}{3}\Bigr]+O(L^{-3/2})
		=\frac{\Psi_0(\delta;p)}{\sqrt{2\pi Lpq}}+O(L^{-3/2}),
	\]
	since $2(q-\delta)-\tfrac43(q-p)=-2\delta+1-\tfrac{q-p}3=-2B_1(\delta)-\tfrac{q-p}3$.

	\emph{The complete expansion and its remainder.}  The standardisation $x=x_*+\sqrt v\,z$
	presents $I_p(A,B)$ as the ratio of the partial integral
	$\int_{-\infty}^{w}e^{-z^2/2}R(z)\,dz$ to the full integral
	$\int_{-\infty}^{\infty}e^{-z^2/2}R(z)\,dz$, where $e^{-z^2/2}R(z)=e^{\psi(x_*+\sqrt v\,z)-\psi(x_*)}$
	is the normalised integrand and is dominated by $1$ throughout, while on any fixed $z$-window
	$R(z)=\exp\bigl(\sum_{j\ge3}d_jz^j\bigr)$ is analytic with $d_j=O(L^{1-j/2})$.  Expanding the
	exponent to finite order with a Taylor remainder and integrating term by term against the
	Gaussian weight --- Laplace's method for this ratio, the standard central-regime analysis of the
	incomplete beta integral \cite{temme1992,gil_segura_temme} --- yields a genuine asymptotic
	expansion of $T_L$ in powers of $L^{-1/2}$, uniform for $p$ in compact subsets of $(0,1)$ and
	$\delta$ bounded, with remainder $O(L^{-(M+1)/2})$ after the first $M$ terms.  The parity
	argument above removes the even half-powers, leaving \eqref{eq:tail} with determinate coefficients $\Psi_n(\delta;p)$; the leading one is
		\eqref{eq:psi0}, computed above.  Each $\Psi_n$ is produced by one finite procedure: expand
		$R(z)=\exp(\sum_{j\ge3}d_jz^j)$ to the required order, integrate the numerator and denominator
		term by term against $e^{-z^2/2}$ using the Gaussian moments, divide the two series, and
		re-expand $w$, $v$ and the $d_j$ in powers of $S^{-1/2}$; this yields $\Psi_n$ as a determinate
		polynomial in $\delta$ whose coefficients are rational in $pq$ and $q-p$.  We now carry out
		this procedure at the first order beyond the leading one, in the same notation, with $S=L-1$.  Since
	\[
		x_*=p-\frac{q-\delta}{S},\qquad 1-x_*=q+\frac{q-\delta}{S},
	\]
	we have
	\begin{equation}\label{eq:psi1-orders}
		v=\frac{pq}{S}+\frac{(q-\delta)(p-q)}{S^{2}}+O(S^{-3}),\qquad
		w=\frac{q-\delta}{\sqrt{Spq}}\bigl(1+O(S^{-1})\bigr).
	\end{equation}
	Moreover
	\[
		d_j:=\frac{\psi^{(j)}(x_*)}{j!}\,v^{j/2}
		=\frac1j\,S\bigl[(-1)^{j-1}x_*^{\,1-j}-(1-x_*)^{1-j}\bigr]v^{j/2},
	\]
	and hence
	\[
	\begin{aligned}
		d_3&=\frac{q-p}{3\sqrt{Spq}}+O(S^{-3/2}),\\
		d_4&=-\frac{1-3pq}{4pq\,S}+O(S^{-2}),\\
		d_5&=\frac{(q-p)(1-2pq)}{5(pq)^{3/2}S^{3/2}}+O(S^{-5/2}).
	\end{aligned}
	\]

	Let
	\[
		\mathcal N:=\int_{-\infty}^{w}e^{-z^2/2}R(z)\,dz,\qquad
		\mathcal D:=\int_{-\infty}^{\infty}e^{-z^2/2}R(z)\,dz .
	\]
	Then $T_L=2\mathcal N/\mathcal D-1=(2\mathcal N-\mathcal D)/\mathcal D$.  Expanding
	\[
		R(z)=1+d_3z^3+d_4z^4+d_5z^5+\frac12d_3^2z^6+d_3d_4z^7+\frac16d_3^3z^9+O(L^{-2})
	\]
	on bounded $z$-sets, and using the Gaussian moments
	\[
		\int_{-\infty}^{\infty}z^{2k}e^{-z^2/2}\,dz=\sqrt{2\pi}\,(2k-1)!!,\qquad
		\int_{-\infty}^{0}z^{2k+1}e^{-z^2/2}\,dz=-(2k)!!,
	\]
	gives
	\begin{align}
		2\mathcal N-\mathcal D
		&=2\Bigl(w-\frac{w^3}{6}+\frac{w^5}{40}\Bigr)
			-4d_3-16d_5-96\,d_3d_4-128\,d_3^{3}+O(L^{-5/2}),\label{eq:tail-num}\\
		\mathcal D
		&=\sqrt{2\pi}\Bigl(1+3d_4+\frac{15}{2}d_3^{2}\Bigr)+O(L^{-2}).\label{eq:tail-den}
	\end{align}
	Here the terms displayed are exactly those of total weight at most three in the central
	scaling: the omitted terms contribute $O(L^{-5/2})$ to the numerator and $O(L^{-2})$ to the
	denominator.  Dividing \eqref{eq:tail-num} by \eqref{eq:tail-den},
	\[
		T_L=\frac1{\sqrt{2\pi}}
		\Bigl[2w-\frac{w^3}{3}-4d_3-16d_5-96d_3d_4-128d_3^3\Bigr]
		\Bigl(1-3d_4-\frac{15}{2}d_3^2\Bigr)+O(L^{-5/2}).
	\]
	Substituting the exact formulae for $w,d_3,d_4,d_5$ and expanding them to the orders required
	by \eqref{eq:tail-num}--\eqref{eq:tail-den}, then replacing $S=L-1$ by
	$L(1+O(L^{-1}))$, yields
	\[
		T_L=\frac{1}{\sqrt{2\pi Lpq}}
		\left(\Psi_0(\delta;p)+\frac{\Psi_1(\delta;p)}{L}+O(L^{-2})\right),
	\]
	with $\Psi_0$ as in \eqref{eq:psi0} and
	\[
		\Psi_1(\delta;p)=\frac1{pq}\Bigl[\frac{B_3(\delta)}{3}
			+\frac{q-p}{2}B_2(\delta)
			+\frac{1-pq}{6}B_1(\delta)
			+\frac{(q-p)(2+pq)}{540}\Bigr],
	\]
	which is \eqref{eq:psi1}.  The last simplification is a direct collection of powers of
	$\delta$; the Bernoulli form follows from
	$B_1(\delta)=\delta-\tfrac12$, $B_2(\delta)=\delta^2-\delta+\tfrac16$ and
	$B_3(\delta)=\delta^3-\tfrac32\delta^2+\tfrac12\delta$.
\end{proof}

\begin{remark}[Non-lattice version]\label{rem:nonlattice}
	The proof uses only $A=Lp+\delta$, $B=Lq-\delta+1$ with $A+B=L+1$ and $\delta$ bounded;
	integrality of $A$ is never used.  Hence \eqref{eq:tail} holds verbatim for the regularised
	incomplete beta function through $I_p(A,B)=\tfrac12\bigl(1+T_L(\delta;p)\bigr)$ at real
	$A,B$, uniformly for $p$ in compact subsets of $(0,1)$ and $\delta$ in bounded sets.  This is
	the form used in Section~\ref{sec:median}.
\end{remark}

\begin{remark}[The same structure recurs]\label{rem:grammar}
	Three features, familiar from the expansion of the mean absolute deviation
	\cite{elezovic_mad}: the shape (integer powers of $L^{-1}$ relative to a De~Moivre-type
	prefactor); the alphabet (Bernoulli polynomials of the displacement over powers of $pq$);
	and the parity (odd-index Bernoulli polynomials enter with weights free of $q-p$,
	even-index and constant terms carry $q-p$ as a factor, so that $\Psi_n(\,\cdot\,;\tfrac12)$
	is odd about $\delta=\tfrac12$).  The continuity correction alone would give
	$\Psi_0=-2B_1(\delta)$; the skewness supplies the remaining $-\tfrac{q-p}3$; for the binomial this refined continuity
		correction is Cressie's \cite{cressie1978}.
\end{remark}

\begin{remark}[What is classical here, and what is not]\label{rem:tail-lit}
	Neither the leading coefficient \eqref{eq:psi0} nor the presence of Bernoulli polynomials at
	every order is new, and we claim neither.  The device originates with Esseen \cite{esseen}.
	In its definitive form it is Theorem 23.1 of Bhattacharya and Rao
	\cite[\S23]{bhattacharya_rao}: for i.i.d.\ lattice vectors, the distribution function
	admits an expansion whose terms are the periodised Bernoulli functions
	$S_\alpha=B_\alpha/\alpha!$ of the lattice point --- constructed in their Appendix A.4 by
	Euler--Maclaurin --- multiplying derivatives of the Cram\'er--Edgeworth terms, to all
	orders; their Corollary 23.2 writes out the first order.  Read against that theorem,
	$-2B_1(\delta)$ is the classical continuity correction and $-\tfrac{q-p}3$ is the classical
	skewness term of Cornish--Fisher type \cite{cornish_fisher} in lattice dress; see also
	\cite{gnedenko_kolmogorov,petrov}.

	Kolassa and McCullagh \cite{kolassa_mccullagh} must be singled out, because two features of
	our statement are visible in theirs.  They write Esseen's discontinuous part as
	$D_{n,r}(t;\kappa)=\sum_\nu g_\nu n^{-\nu/2}Q_\nu(n^{1/2}t)E^{(\nu)}_{n,r}(t;\kappa)$, the
	$Q_\nu$ being the Fourier sine and cosine series that are the periodised Bernoulli
	polynomials; at a continuity-corrected point they evaluate $Q_s(t^{+})=g_sB_s(\tfrac12)/s!$
	--- Bernoulli polynomials, explicitly --- and observe that only even-indexed terms then
	survive, so that \emph{the series proceeds in whole powers of $n$}.  That last observation
	is the analogue, at the single displacement $\delta=\tfrac12$, of the integer-power
	statement in Theorem~\ref{thm:tail}, and we claim no priority for it.

	What Theorem~\ref{thm:tail} adds is twofold.  First, generality in the displacement: the
	classical statements are made either with unevaluated coefficients \cite{bhattacharya_rao}
	or at the continuity-corrected point alone \cite{kolassa_mccullagh}, whereas
	$\Psi_n(\delta;p)$ is a function of an arbitrary bounded $\delta$.  This is not a
	refinement for its own sake: \S\ref{sec:median} asks for the $\delta$ at which the tail
	\emph{vanishes}, a question that cannot even be posed at a fixed displacement.  Second, the
	evaluation: in the general theory the Edgeworth factors remain unevaluated functions of the
	cumulants and the expansion is a double sum, Bernoulli functions against derivatives of
	Edgeworth polynomials, whereas for the binomial that double sum collapses to a
	\emph{single} Bernoulli polynomial per order, with weight an explicit rational function of
	$pq$ and $q-p$, obeying the parity rule of Remark~\ref{rem:grammar}.  We have not found
	that carried out for any particular lattice law.  Theorem~\ref{thm:tail} is thus a closed
	evaluation of a known expansion, not a new expansion.

	It should also be said which asymptotic line this is \emph{not}.  Temme's uniform
	asymptotics for the binomial and incomplete beta, in the form developed with Gil and Segura
	\cite{gil_segura_temme}, expand in a continuous variable through an error-function
	representation valid uniformly across the whole range; the cut there is not restricted to
	the lattice, no periodic term occurs, and the coefficients are ordinary polynomials
	generated by inverting an implicit relation.  That expansion and the present one describe
	the same central function in different coefficient forms: the non-lattice reading of
	Remark~\ref{rem:nonlattice} places us in exactly Temme's regime, and what we add is the
	Bernoulli-in-$\delta$ collapse of its coefficients, not a different object.

	The companion \cite{elezovic_folded} treats the \emph{complementary} regime, and the two do
	not overlap.  There the centre is a prescribed $Nr$ with $r\ne p$ fixed, so the fold sits in
	the large-deviation range: the correction to $\E|X-Nr|$ is exponentially small, its tail is a
	stop-loss (weighted) object, and its coefficients are Eulerian polynomials in the exponential
	tilt $\rho=rq/((1-r)p)$, expanded in $1/(N(1-\rho)^{2})$.  Theorem~\ref{thm:tail} is the
	central counterpart --- a bounded displacement $\delta$ from the mean, a polynomially small
	correction, and Bernoulli polynomials of $\delta$ over powers of $pq$ --- the two meeting only
	in the moderate-deviation transition $r\to p$.
\end{remark}

\section{The median of the beta distribution}\label{sec:median}

	Since $\Pr\{\Bin(L,p)\ge\nu\}=I_p(\nu,L-\nu+1)=\Pr\{\mathrm{Beta}(\nu,L-\nu+1)\le p\}$, the
	tail $T_L(\delta;p)$ vanishes exactly when $p$ is the \emph{median} of
	$\mathrm{Beta}(A,B)$, $A=Lp+\delta$, $B=Lq-\delta+1$.  Setting the series \eqref{eq:tail}
	to zero therefore expands the beta median.  Note first that
	\begin{equation}\label{eq:psi0-zero}
		\Psi_0(\delta;p)=0
		\iff
		\delta=\frac{1+p}{3}
		\iff
		p=\frac{A-\tfrac13}{A+B-\tfrac23},
	\end{equation}
	the classical approximation to the beta median; and that at this point the
	$\delta$-dependent terms of $\Psi_1$ collapse:
	\begin{equation}\label{eq:psi1-at-median}
		\Psi_1\Bigl(\frac{1+p}{3};\,p\Bigr)=-\frac{8\,(q-p)(2+pq)}{405\,pq},
	\end{equation}
	an evaluation most easily performed in the shifted variable $e=\delta-(1+p)/3$, in which
	$\Psi_1=\tfrac{e^{3}}{3pq}+\tfrac{(q-p)e^{2}}{3pq}+\tfrac{(7pq-1)e}{18pq}
	-\tfrac{8(q-p)(2+pq)}{405pq}$: only the constant term survives.

\begin{theorem}[The beta median to second order]\label{thm:beta-median}
	Let $a,b>1$, $L:=a+b-1$, and
	\[
		p_0:=\frac{a-\tfrac13}{a+b-\tfrac23},\qquad q_0:=1-p_0 .
	\]
	Then the median of $\mathrm{Beta}(a,b)$ satisfies
	\begin{equation}\label{eq:beta-median}
		\operatorname{med}\mathrm{Beta}(a,b)
		=p_0
		+\frac{4\,(q_0-p_0)(2+p_0q_0)}{405\,p_0q_0\;L\,(L+\tfrac13)}
		+O(L^{-3}),
	\end{equation}
	uniformly for $a/(a+b)$ in compact subsets of $(0,1)$.  For $a=b$ the correction vanishes
	and the median is exactly $\tfrac12$, consistently.
\end{theorem}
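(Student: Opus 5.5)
The plan is to read the median off the zero of the central tail $T_L$, using the non-lattice form of Theorem~\ref{thm:tail} (Remark~\ref{rem:nonlattice}). Let $m:=\operatorname{med}\mathrm{Beta}(a,b)$, so that $I_m(a,b)=\tfrac12$. Put $L=a+b-1$ and define $\delta$ by $a=Lm+\delta$. Then $b=Lq_m-\delta+1$ with $q_m=1-m$, and $A+B=L+1$ as in the setup of Theorem~\ref{thm:tail}. The median condition is exactly
\[
	T_L(\delta;m)=0 .
\]
Theorem~\ref{thm:tail}, applied at $p=m$, gives
\[
	\Psi_0(\delta;m)+\frac{\Psi_1(\delta;m)}{L}+O(L^{-2})=0 .
\]
A direct check gives $1-\tfrac{q-p}{3}=\tfrac{2(1+p)}{3}$, so $\Psi_0(\delta;p)=-2\bigl(\delta-\tfrac{1+p}{3}\bigr)$. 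This is linear in $\delta$ with slope $-2$, and the equation can be solved by one Newton step about $\delta_0=(1+m)/3$.

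\emph{Step 1: a priori localisation.} This justifies using an expansion that is uniform only for bounded $\delta$ and $p$ in compacta. Since $m\in(0,1)$ and $\delta=a-Lm$, boundedness of $\delta$ is equivalent to $m=a/L+O(1/L)$. I would get this from monotonicity. The map $p\mapsto I_p(a,b)$ is increasing, so it suffices to find $p_\pm=(a-\tfrac{1+p_\pm}{3}\mp C/L)/L$-type points with $p_\pm\in[\epsilon,1-\epsilon]$ at which $T_L$ has opposite signs. Equivalently, fix $\delta_\pm=\tfrac{1+p}{3}\pm C/L$, possibly with the simpler choice $\delta_\pm=\tfrac{1+p}{3}\pm\eta$ for a fixed $\eta>0$ first. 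At such points the leading term $\mp2\eta/\sqrt{2\pi Lpq}$ dominates the $O(L^{-3/2})$ remainder uniformly. This squeezes $m$ into a window where $\delta=\tfrac{1+m}{3}+O(1)$ with arbitrarily small $O(1)$. Feeding this back, with $\eta$ replaced by $C/L$ and $C$ large, gives
\[
	\delta=\tfrac{1+m}{3}+O(L^{-1}) .
\]
I expect this step to be the main obstacle. It is the only place where the uniformity claims of Theorem~\ref{thm:tail} (in $p$ and in bounded $\delta$) have to be used carefully, together with the hypothesis that $a/(a+b)$ lies in a compact set. The hypothesis $a,b>1$ just keeps $L>1$ and the beta density unimodal.

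\emph{Step 2: solve for $\delta$.} Write $\delta=\tfrac{1+m}{3}+\varepsilon$ with $\varepsilon=O(L^{-1})$. Then $\Psi_1(\delta;m)=\Psi_1(\tfrac{1+m}{3};m)+O(L^{-1})$, since $\Psi_1$ is a polynomial in $\delta$. By \eqref{eq:psi1-at-median} the equation becomes
\[
	-2\varepsilon-\frac{8(q_m-m)(2+mq_m)}{405\,mq_m\,L}+O(L^{-2})=0,
\]
so
\[
	\varepsilon=-\frac{4(q_m-m)(2+mq_m)}{405\,mq_m\,L}+O(L^{-2}).
\]

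\emph{Step 3: convert back to $m$.} From $a=Lm+\tfrac{1+m}{3}+\varepsilon$ we get $m(L+\tfrac13)=a-\tfrac13-\varepsilon$. Since $L+\tfrac13=a+b-\tfrac23$, this gives
\[
	m=p_0-\frac{\varepsilon}{L+\tfrac13} .
\]
In particular $m-p_0=O(L^{-2})$. Therefore replacing $m,q_m$ by $p_0,q_0$ inside the smooth coefficient $(q-p)(2+pq)/(pq)$ costs only $O(L^{-2})$ in $\varepsilon$, hence $O(L^{-3})$ in $m$. The $O(L^{-2})$ error in $\varepsilon$ likewise contributes $O(L^{-3})$. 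This yields \eqref{eq:beta-median}, uniformly because every constant depends only on the compact range of $p$.

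\emph{The case $a=b$.} Here $p_0=q_0=\tfrac12$, so the correction term vanishes. The exact value $m=\tfrac12$ follows independently from the symmetry $\mathrm{Beta}(a,a)\overset{d}{=}1-\mathrm{Beta}(a,a)$.
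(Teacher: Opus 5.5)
Your proposal is correct. Its core is the paper's own argument: write the median condition as $T_L(\delta;m)=0$, use that $\Psi_0=-2\bigl(\delta-\tfrac{1+p}{3}\bigr)$ is linear in $\delta$, insert the value \eqref{eq:psi1-at-median}, and solve $m(L+\tfrac13)=a-\tfrac13-\varepsilon$. The symmetric case is handled identically. Steps 2--3 match the paper line for line.

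The genuine difference is the a priori localisation.

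\emph{The paper's route.} It takes boundedness of $\delta$ from an external, cited fact: for $a,b>1$ the median lies between the mode $(a-1)/(a+b-2)$ and the mean, hence within $O(1/L)$ of $a/(a+b)$. It then lets the expansion itself sharpen this to $\delta-\tfrac{1+m}{3}=O(L^{-1})$ when it ``solves for $\delta$''.

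\emph{Your route.} You bracket the median between the test points $p_\pm=(a-\tfrac13\mp C/L)/(L+\tfrac13)$. At these points $\delta_\pm-\tfrac{1+p_\pm}{3}=\pm C/L$ holds exactly. Take $C$ larger than a uniform bound for $|\Psi_1|$ on a fixed bounded $\delta$-set and compact $p$-set. The sign of $T_L(\delta_\pm;p_\pm)$ is then forced for large $L$, and monotonicity of $p\mapsto I_p(a,b)$ finishes the job.

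\emph{Comparison.} Your route is self-contained. It invokes Theorem~\ref{thm:tail} and Remark~\ref{rem:nonlattice} only at points where their hypotheses visibly hold, and it uses neither unimodality nor the mode--median--mean ordering. It also gives the sharp window $|\delta-\tfrac{1+m}{3}|<C/L$ in one stroke, and no circularity arises, since the constants do not depend on $C$ once $C/L\le1$. The paper's version is shorter on the page but rests on a cited inequality.

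Two small remarks. First, your preliminary pass with a fixed $\eta$ is unnecessary; the $C/L$ choice works directly. Second, replacing $m$ by $p_0$ in the coefficient actually costs $O(L^{-3})$ in $\varepsilon$ and $O(L^{-4})$ in $m$, because $m-p_0=O(L^{-2})$. Your stated bounds are merely conservative, and they still give \eqref{eq:beta-median}.
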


\begin{proof}
	Let $p_m$ denote the median and put $\delta:=a-Lp_m$, so that $(A,B)=(a,b)$ corresponds to
	$(L,p_m,\delta)$ in the notation of Section~\ref{sec:tail} ($A+B=L+1$).  The defining
	equation $I_{p_m}(a,b)=\tfrac12$ reads $T_L(\delta;p_m)=0$.  Since $a/(a+b)$ lies in a compact
	subinterval of $(0,1)$, so does the median $p_m$: for $a,b>1$ it differs from the mean $a/(a+b)$ by $O((a+b)^{-1})$ (it
		lies between the mode $(a-1)/(a+b-2)$ and the mean --- the mode--median--mean ordering; see
		\cite{payton_yy1989} for the beta-specific bound),
		whence $Lp_m=a+O(1)$ and $\delta=a-Lp_m$ is
	bounded; the non-lattice form of Theorem~\ref{thm:tail} (Remark~\ref{rem:nonlattice}) then
	applies at the real parameters $(a,b)$.  Dividing \eqref{eq:tail} by the prefactor,
	\[
		\Psi_0(\delta;p_m)+\frac{\Psi_1(\delta;p_m)}{L}+O(L^{-2})=0 .
	\]
	Since $\Psi_0=-2\bigl(\delta-\tfrac{1+p_m}3\bigr)$, solving for $\delta$ gives
	\[
	\begin{aligned}
		\delta
		&=\frac{1+p_m}{3}
			+\frac{\Psi_1\bigl(\tfrac{1+p_m}3;p_m\bigr)}{2L}+O(L^{-2})\\
		&=\frac{1+p_m}{3}
			-\frac{4(q_m-p_m)(2+p_mq_m)}{405\,p_mq_m\,L}+O(L^{-2}),
	\end{aligned}
	\]
	by \eqref{eq:psi1-at-median}.  Substituting $\delta=a-Lp_m$ and collecting $p_m$:
	\[
		p_m\Bigl(L+\frac13\Bigr)=a-\frac13
		+\frac{4(q_m-p_m)(2+p_mq_m)}{405\,p_mq_m\,L}+O(L^{-2}) ,
	\]
	so $p_m=p_0+O(L^{-2})$ at leading order, and replacing
	$p_m$ by $p_0$ inside the $O(L^{-1})$ correction (cost $O(L^{-3})$) gives
	\eqref{eq:beta-median}.  For $a=b$, $p_0=q_0=\tfrac12$ and the correction vanishes;
	$\operatorname{med}=\tfrac12$ holds exactly by symmetry.
\end{proof}

\begin{proposition}[Choi's expansion as the formal one-parameter limit]\label{prop:choi}
	Fix $a>1$ and let $b\to\infty$.  The second-order approximant on the right of
	\eqref{eq:beta-median}, multiplied by $b$, tends to
	\[
		a-\frac13+\frac{8}{405\,\bigl(a-\tfrac13\bigr)},
	\]
	the first two terms of Choi's expansion
	$\operatorname{med}\Gamma(a)=a-\tfrac13+\tfrac8{405a}+O(a^{-2})$ of the median of the gamma
	distribution \cite{choi1994}.  This is a matching of \emph{approximants}, not of exact
	quantities: the scaled median $b\,\operatorname{med}\mathrm{Beta}(a,b)$ converges to
	$\operatorname{med}\Gamma(a,1)$, whose full expansion is Choi's, while \eqref{eq:beta-median}
	is not uniform as $p_0\to0$.  The statement is thus a formal degeneration of the displayed
	correction, outside the compact-ratio regime of Theorem~\ref{thm:beta-median}; the constant
	$\tfrac8{405}$ is the value \eqref{eq:psi1-at-median} of $\Psi_1$ at the median point, read at
	$p_0\to0$, $q_0\to1$.
\end{proposition}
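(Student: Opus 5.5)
The statement has two parts: an explicit limit of the approximant on the right of \eqref{eq:beta-median}, and a comparison with Choi's expansion. I plan to verify the limit by direct asymptotics in $b$ at fixed $a$, treating it purely as an algebraic limit. No appeal to Theorem~\ref{thm:beta-median} is needed there, because that theorem is not uniform as $p_0\to0$.

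\emph{The leading term.} With $p_0=(a-\tfrac13)/(a+b-\tfrac23)$ we get
\[
b\,p_0=\Bigl(a-\tfrac13\Bigr)\Bigl(1-\frac{a-\tfrac23}{a+b-\tfrac23}\Bigr).
\]
This equals $a-\tfrac13+O(b^{-1})$, so the scaled leading term tends to $a-\tfrac13$.

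\emph{The correction.} As $b\to\infty$ we have $q_0\to1$, $q_0-p_0\to1$ and $2+p_0q_0\to2$. We also have $p_0q_0=(a-\tfrac13)b^{-1}(1+O(b^{-1}))$, and with $L=a+b-1$, $L(L+\tfrac13)=b^{2}(1+O(b^{-1}))$. The correction term is therefore
\[
\frac{8}{405\,(a-\tfrac13)\,b}\bigl(1+O(b^{-1})\bigr).
\]
Multiplying by $b$, it tends to $8/\bigl(405(a-\tfrac13)\bigr)$. Adding the two limits gives the displayed expression.

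To match Choi's expansion, I expand
\[
\frac{8}{405(a-\tfrac13)}=\frac{8}{405a}+O(a^{-2}).
\]
Hence $a-\tfrac13+8/\bigl(405(a-\tfrac13)\bigr)$ agrees with $a-\tfrac13+\tfrac{8}{405a}$ up to $O(a^{-2})$, which is exactly the precision of the two displayed terms of \cite{choi1994}. The constant $\tfrac{8}{405}$ arises because the factor $(q_0-p_0)(2+p_0q_0)$ tends to $2$. Multiplied by the $4/405$ in \eqref{eq:beta-median}, this $2$ gives $8/405$. It is the same constant that appears in \eqref{eq:psi1-at-median} when read at $p\to0$, $q\to1$.

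To justify the interpretive sentence, I would check that $b\,\mathrm{med}\,\mathrm{Beta}(a,b)\to\mathrm{med}\,\Gamma(a,1)$. Write $\mathrm{Beta}(a,b)=G_a/(G_a+G_b)$ with independent gamma variables. Then $b\,G_a/(G_a+G_b)\to G_a$ almost surely, because $G_b/b\to1$. Since the $\Gamma(a,1)$ distribution function is continuous and strictly increasing, convergence in distribution implies convergence of medians. I would stress that this does not make \eqref{eq:beta-median} valid in the limit, since its $O(L^{-3})$ remainder is not controlled as $p_0\to0$. The proposition therefore matches approximants only, as stated.

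I expect no real obstacle. The only point needing care is to keep the uniformity caveat explicit and not present the limit as a consequence of Theorem~\ref{thm:beta-median}.
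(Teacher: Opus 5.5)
Your proposal is correct and follows the paper's own proof: the same direct limits $b\,p_0\to a-\tfrac13$, $(q_0-p_0)(2+p_0q_0)\to2$, $p_0q_0\sim(a-\tfrac13)/b$ and $L(L+\tfrac13)\sim b^{2}$, followed by expanding $8/\bigl(405(a-\tfrac13)\bigr)$ in powers of $1/a$. Your gamma-representation argument for $b\,\operatorname{med}\mathrm{Beta}(a,b)\to\operatorname{med}\Gamma(a,1)$ adds a justification that the paper only asserts; convergence in probability of $G_b/b\to1$ already suffices there, so no almost-sure coupling is needed.
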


\begin{proof}
	On the right of \eqref{eq:beta-median}, $b\,p_0=(a-\tfrac13)\,b/(a+b-\tfrac23)\to a-\tfrac13$;
	in the correction $p_0\to0$, $q_0\to1$, so $(q_0-p_0)(2+p_0q_0)\to2$,
	$p_0q_0\sim(a-\tfrac13)/b$ and $L(L+\tfrac13)\sim b^{2}$, whence
	\[
		b\cdot\frac{4(q_0-p_0)(2+p_0q_0)}{405\,p_0q_0\,L(L+\tfrac13)}
		\;\longrightarrow\;\frac{8}{405\,(a-\tfrac13)} .
	\]
	Finally $\tfrac8{405(a-1/3)}=\tfrac8{405a}\bigl(1+\tfrac1{3a}+\cdots\bigr)$ matches Choi's
	$a-\tfrac13+\tfrac8{405a}$ through the stated order.
\end{proof}

\begin{remark}[Position in the literature]\label{rem:median-lit}
	The first-order approximation \eqref{eq:psi0-zero} is due to Kerman \cite{kerman}, who
	obtained it by combining the gamma representation of the beta law with the
	mode--median--mean ordering and inserting the constant $\tfrac13$ from the gamma median (the underlying normal approximation is
		that of Peizer and Pratt \cite{peizer_pratt1968}).
	That constant has its own history: conjectured by Chen and Rubin \cite{chen_rubin}, who
	proved the bounds $a-1<\operatorname{med}\Gamma(a)<a$, and established as an asymptotic
	statement, $\operatorname{med}\Gamma(a)=a-\tfrac13+o(1)$, by Berg and Pedersen
	\cite{berg_pedersen}.  Kerman's formula is supported numerically and by a consistency
	argument, but is not proved; Theorem~\ref{thm:beta-median} proves it, as the first term of
	an expansion, and supplies the next.

	The second-order term appears to be new.  The general machinery for such expansions does
	exist --- Temme's asymptotic inversion of the incomplete beta function \cite{temme1992}
	treats precisely the regime $a,b\to\infty$ with $a/(a+b)$ fixed, and generates coefficients
	recursively --- but it is nowhere specialised to the median, nor reduced to closed form.
	The parallel on the gamma side is instructive: Olde~Daalhuis and Nemes
	\cite{oldedaalhuis_nemes} give the general quantile expansion with explicit polynomial
	coefficients, whose value at the median reproduces Choi's $\tfrac8{405}$ and
	$\tfrac{184}{25515}$ (see also Pedersen \cite{pedersen_quantiles}) --- a quarter of a
	century after Choi \cite{choi1994} had obtained the median case directly.  The present
	theorem is the beta analogue of that reduction, which to our knowledge has not been carried
	out.  The uniform expansion of the incomplete beta \emph{function} in the present both-large
		regime is that of Nemes and Olde~Daalhuis \cite{nemes_oldedaalhuis}, and
		Theorem~\ref{thm:beta-median} specialises it to the point where the tail vanishes;
		expansions of beta quantiles with one parameter bounded
	\cite{askitis} are a genuinely different regime.

	Numerically, \eqref{eq:beta-median} improves the first-order approximation by roughly the
	factor of order $L$: for instance, at $(a,b)=(7,300)$ the errors are $9.6\cdot10^{-6}$ (first
	order) against $1.4\cdot10^{-8}$ (second order), and at $(a,b)=(12,1200)$,
	$1.4\cdot10^{-6}$ against $2.4\cdot10^{-9}$.  The gamma limit was checked to third order:
	at $a=3$, the gamma median $\operatorname{med}\Gamma(3)$ minus Choi's truncation
		$a-\tfrac13+\tfrac8{405a}$ equals
	$8.0\cdot10^{-4}\approx\tfrac{184}{25515\,a^{2}}$ --- precisely Choi's \emph{third} term,
	as it must.  Chen and Rubin's bounds for the gamma median \cite{chen_rubin} sit at the
	first order of this picture.
\end{remark}

\section{Consequences for the moment ladder}\label{sec:consequences}

\begin{corollary}\label{cor:complete}
	Fix an odd $m$ and $p\in(0,1)$.  Then $\E|X-Np|^{m}$ admits a complete asymptotic expansion
	\[
		\E|X-Np|^{m}=(Npq)^{m/2}\sum_{n\ge0}\frac{\Lambda_{m,n}(h_N;p)}{N^{n}},
	\]
	in integer powers of $N^{-1}$, uniform for $p$ in compact subsets of $(0,1)$, whose
	coefficients $\Lambda_{m,n}$ are polynomials in the displacement
	$h_N=\lceil Np\rceil-Np$, built from Bernoulli polynomials of the shifted arguments $h_N+ip$,
		with weights rational in $pq$ and $q-p$.  They are assembled from the
	finitely many masses and tails of Theorem~\ref{thm:ladder} through the local expansion of
	\cite[Thm 3.1]{elezovic_mad} and the tail expansion of Theorem~\ref{thm:tail}
	(Remark~\ref{rem:nonlattice} supplying the non-lattice form at the displacements
	$\delta=h+ip$).
\end{corollary}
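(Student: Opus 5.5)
The plan is to start from the exact identity \eqref{eq:ladder} of Theorem~\ref{thm:ladder}, which writes $\E|X-Np|^{m}$ as a finite sum of $J+1$ boundary masses $\kappa_i\,\nu q\,b(\nu;N-i,p)$ and $J$ central tails $c_i\,T_{N-i}(h+ip;p)$, with $m=2J+1$. Since the number of terms is fixed, it suffices to expand each term separately in the required form and add the results. We also note that $\nu=Np+h$ and that $\nu-(N-i)p=h+ip$. So every object can be written at level $L=N-i$ with the bounded displacement $\delta=h+ip\in[0,1+J)$.

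\emph{Masses.} We write $\nu q\,b(\nu;N-i,p)$ as $\nu q\cdot b(\,(N-i)p+(h+ip);N-i,p)$. The local expansion of \cite[Thm 3.1]{elezovic_mad} applies to a mass at a bounded displacement. It gives $b=(2\pi (N-i)pq)^{-1/2}\sum_n \beta_n(h+ip;p)(N-i)^{-n}$, with $\beta_n$ polynomial in the displacement and built from Bernoulli polynomials. We would use the non-lattice form here, since $h+ip\notin[0,1)$ for $i\ge1$. If the cited theorem is stated only for $\delta\in[0,1)$, we would apply it through the ratio $b(\nu;N-i,p)=b(\nu;N,p)\prod$ of elementary factors, or rerun Stirling's formula directly. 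Stirling's formula plus the Appell property $B_n(x+y)=\sum\binom nk B_k(x)y^{n-k}$ keeps the Bernoulli structure. Next, $\nu q=Npq+hq$ is linear, and $\kappa_i$ is a polynomial in $N,p,h$ of $N$-degree at most $i$. Its maximal degree pairs with the $(Npq)^{i}$-type factor produced by the reduction. Re-expanding $(N-i)^{-n-1/2}=N^{-n-1/2}(1-i/N)^{-n-1/2}$ in integer powers of $N^{-1}$ gives each mass term as $N^{J+1/2}\cdot(\text{series in }N^{-1})$. That is, it has the form $(Npq)^{m/2}\sum_n(\cdot)N^{-n}$, provided the degree count holds.

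\emph{Tails.} Theorem~\ref{thm:tail}, with Remark~\ref{rem:nonlattice} supplying displacements outside $[0,1)$, gives $T_{N-i}(h+ip;p)=(2\pi(N-i)pq)^{-1/2}\sum_n\Psi_n(h+ip;p)(N-i)^{-n}$. This runs in integer powers relative to the half-power prefactor, and the coefficients are polynomials in $h+ip$ with weights rational in $pq$ and $q-p$. The coefficient $c_i$ has $N$-degree at most $i\le J$, by its construction from $(N-i')pq$ factors. Re-expanding in $N^{-1}$ as before, each tail term is $O(N^{i-1/2})\subseteq N^{J+1/2}\cdot O(1)$, again with only integer powers relative to $N^{m/2}$.

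\emph{Main obstacle.} The delicate step is the \emph{parity bookkeeping}: showing that after all re-expansions only the powers $N^{m/2-n}$ occur, so that no integer powers of $N$ appear in the absolute expansion. Each mass and tail contributes $N^{\deg-1/2}$ times a series in $N^{-1}$, so we need every polynomial coefficient ($\kappa_i$ times $\nu q$, and $c_i$) to contribute only integer powers of $N$. This holds because $\kappa_i$, $c_i$ and $\nu q$ lie in $\mathbb Q[N,p,h]$, so their products with $N^{-1/2}\cdot(\text{series in }N^{-1})$ are $N^{-1/2}$ times a Laurent polynomial in $N$. The highest power is $N^{J+1/2}=N^{m/2}$, which we would confirm from the recursion: $P^{(i)}$ carries the factor $\prod_{l<i}(N-l)pq$ and has degree $m-2i$. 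Uniformity in compact $p$-sets is inherited from both cited expansions, since only finitely many terms occur. Finally, we would express $\Psi_n(h+ip)$ and $\beta_n(h+ip)$ as Bernoulli polynomials of the shifted arguments $h+ip$, which is the stated coefficient form.
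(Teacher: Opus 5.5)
Your proposal is correct and is essentially the paper's own argument: expand the finitely many masses of \eqref{eq:ladder} by the local expansion of \cite[Thm 3.1]{elezovic_mad} and the tails by Theorem~\ref{thm:tail} at $\delta=h+ip$, $L=N-i$. Polynomial multiples of $N^{1/2}$- and $N^{-1/2}$-series then become integer powers of $N^{-1}$ after division by $(Npq)^{m/2}$, and your explicit count $\deg_N\kappa_i,\deg_N c_i\le i\le J$ makes precise what the paper leaves implicit, namely that nothing exceeds the order $N^{m/2}$. One small correction: no non-lattice detour is needed, since $(N-i)p+(h+ip)=\nu\in\Z$ and Theorem~\ref{thm:tail} already allows any bounded lattice displacement (and $h+ip$ need not lie outside $[0,1)$ anyway).
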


\begin{proof}
	Combining Theorem~\ref{thm:ladder} with the local expansions of
	\cite[Thm 3.1]{elezovic_mad} (for the masses, at the displacements $h+ip$) and
	Theorem~\ref{thm:tail} (for the tails, at $\delta=h+ip$, $L=N-i$) yields, for every odd
	$m$, a complete asymptotic expansion of $\E|X-Np|^{m}$ in integer powers of $N^{-1}$
	relative to the leading order $(Npq)^{m/2}$, with coefficients that are Bernoulli
	polynomials of $h_N$ over powers of $pq$.

	More explicitly, each boundary contribution is a polynomial in $N,p,h$ multiplied by
	$\nu q\,b(\nu;N-i,p)$.  The latter is $N^{1/2}$ times a complete expansion in integer powers
	of $N^{-1}$, by the local expansion at displacement $h+ip$.  Each tail contribution is a
	polynomial $c_i(N,p)$ multiplied by
	$T_{N-i}(h+ip;p)$, and Theorem~\ref{thm:tail} writes this tail as $N^{-1/2}$ times a complete
	expansion in integer powers of $N^{-1}$.  Since $m/2=J+\tfrac12$, division by
	$(Npq)^{m/2}$ converts these finitely many polynomial multiples of $N^{1/2}$ and
	$N^{-1/2}$ into integer powers of $N^{-1}$.  This proves the asserted form.
\end{proof}

	Together with the exact even moments $\mu_{2j}$ this completes the moment sequence of the
	folded binomial: in particular the skewness and kurtosis of $|X-Np|$ have complete
	expansions.  At $p=\tfrac12$ everything reduces to central masses
	(Theorem~\ref{thm:ladder}(ii)); for instance
	\[
		\frac{\E|X-\tfrac N2|^{3}}{(N/4)^{3/2}}
		=2\sqrt{\frac2\pi}\Bigl(1-\frac1{4N}+O(N^{-2})\Bigr),
	\]
	whose leading constant is $\E|Z|^{3}=2\sqrt{2/\pi}$.

\section{Concluding remarks}\label{sec:conclusion}

	The ladder turns a probabilistic question (the moments of the folded binomial) into two
	expansions of independent interest: the local masses of \cite{elezovic_mad} and the central
	tail of Theorem~\ref{thm:tail}.  The tail expansion is the one place where the lattice
	Edgeworth phenomenon \cite{esseen,bhattacharya_rao} meets a closed Bernoulli-polynomial
	form.  As set out in Remark~\ref{rem:tail-lit}, what we claim is the \emph{evaluation}
	\eqref{eq:psi0}--\eqref{eq:psi1}: neither the existence of the expansion nor its
	Bernoulli structure is new --- both are in \cite[\S23]{bhattacharya_rao} in full generality,
	and the integer-power phenomenon is already noted at the continuity-corrected point in
	\cite{kolassa_mccullagh} --- but the collapse of the general double sum to one Bernoulli
	polynomial per order, at an arbitrary displacement and with weights explicit in $pq$ and
	$q-p$, is specific to the binomial and is what makes \S\ref{sec:median} possible.

	The mechanical continuations are $\Psi_2$ (the same computation one order further) and
	the explicit coefficient lists for $\E|X-Np|^{3}$ and $\E|X-Np|^{5}$ at general $p$.

	The reduction of Lemma~\ref{lem:reduction} is a statement about the Katz ratio, not
	about the binomial: the same two steps (telescoping, size-bias) exist for the Poisson,
	negative binomial and hypergeometric laws \cite{elezovic_madfam}, so the odd-moment ladders
	of the whole classical family are within reach of the same method; the family's mean
	absolute deviations are treated in \cite{elezovic_madfam}, and the corresponding tail
	expansions would extend Theorem~\ref{thm:tail} along the seams described there.

	The beta-median theorem suggests a systematic view: medians of the classical continuous
	laws tied to discrete tails (beta--binomial here, gamma--Poisson in the limit of
	Proposition~\ref{prop:choi}) inherit second-order expansions from the $\Psi$-coefficients.
	A gamma--Poisson derivation of Choi's expansion directly from the Poisson analogue of
	Theorem~\ref{thm:tail} would close that circle.



\begin{thebibliography}{99}

\bibitem{chen_rubin}
	J. Chen, H. Rubin,
	\emph{Bounds for the difference between median and mean of gamma and Poisson
	distributions}, Statist. Probab. Lett. \textbf{4} (1986), 281--283.

\bibitem{choi1994}
	K. P. Choi,
	\emph{On the medians of gamma distributions and an equation of Ramanujan},
	Proc. Amer. Math. Soc. \textbf{121} (1994), 245--251.

\bibitem{elezovic_mad}
	N. Elezovi\'c,
	\emph{Local binomial expansions with an Appell shift, and the mean absolute deviation of
	the binomial distribution}, preprint, \texttt{arXiv:2607.18494} [math.CA], 2026.

\bibitem{elezovic_folded}
	N. Elezovi\'c,
	\emph{Absolute deviations of the binomial about a prescribed centre: the tail expansion in
	closed form}, preprint, 2026.

\bibitem{elezovic_madfam}
	N. Elezovi\'c,
	\emph{The mean absolute deviation of the classical discrete distributions: collapse
	identities, complete asymptotic expansions, and enveloping series}, preprint,
	\texttt{arXiv:2608.06232} [math.PR], 2026.

\bibitem{askitis}
	D. Askitis,
	\emph{Asymptotic expansions of the inverse of the beta distribution},
	preprint, \texttt{arXiv:1611.03573}.

\bibitem{berg_pedersen}
	C. Berg, H. L. Pedersen,
	\emph{The Chen--Rubin conjecture in a continuous setting},
	Methods Appl. Anal. \textbf{13} (2006), 63--88.

\bibitem{bhattacharya_rao}
	R. N. Bhattacharya, R. R. Rao,
	\emph{Normal Approximation and Asymptotic Expansions},
	Wiley, New York, 1976; SIAM Classics in Applied Mathematics, Philadelphia, 2010.

\bibitem{cornish_fisher}
	E. A. Cornish, R. A. Fisher,
	\emph{Moments and cumulants in the specification of distributions},
	Rev. Inst. Internat. Statist. \textbf{5} (1937), 307--320.

\bibitem{cressie1978}
	N. Cressie,
	\emph{A finely tuned continuity correction},
	Ann. Inst. Statist. Math. \textbf{30} (1978), 435--442.

\bibitem{diaconis_zabell}
	P. Diaconis, S. Zabell,
	\emph{Closed form summation for classical distributions: variations on a theme of
	De~Moivre}, Statist. Sci. \textbf{6} (1991), 284--302.

\bibitem{esseen}
	C.-G. Esseen,
	\emph{Fourier analysis of distribution functions. A mathematical study of the
	Laplace--Gaussian law}, Acta Math. \textbf{77} (1945), 1--125.

\bibitem{gil_segura_temme}
	A. Gil, J. Segura, N. M. Temme,
	\emph{Asymptotic inversion of the binomial and negative binomial cumulative distribution
	functions}, Electron. Trans. Numer. Anal. \textbf{52} (2020), 270--280.

\bibitem{gnedenko_kolmogorov}
	B. V. Gnedenko, A. N. Kolmogorov,
	\emph{Limit Distributions for Sums of Independent Random Variables},
	Addison--Wesley, Cambridge, MA, 1954.

\bibitem{johnson_kotz_kemp}
	N. L. Johnson, A. W. Kemp, S. Kotz,
	\emph{Univariate Discrete Distributions}, 3rd ed., Wiley, Hoboken, NJ, 2005.

\bibitem{katti1960}
	S. K. Katti,
	\emph{Moments of the absolute difference and absolute deviation of discrete distributions},
	Ann. Math. Statist. \textbf{31} (1960), 78--85.

\bibitem{kerman}
	J. Kerman,
	\emph{A closed-form approximation for the median of the beta distribution},
	preprint, \texttt{arXiv:1111.0433}.

\bibitem{kolassa_mccullagh}
	J. E. Kolassa, P. McCullagh,
	\emph{Edgeworth series for lattice distributions},
	Ann. Statist. \textbf{18} (1990), 981--985.

\bibitem{nemes_oldedaalhuis}
	G. Nemes, A. B. Olde Daalhuis,
	\emph{Uniform asymptotic expansion for the incomplete beta function},
	SIGMA Symmetry Integrability Geom. Methods Appl. \textbf{12} (2016), art.~101, 5 pp.

\bibitem{oldedaalhuis_nemes}
	A. B. Olde Daalhuis, G. Nemes,
	\emph{Asymptotic expansions for the incomplete gamma function in the transition regions},
	Math. Comp. \textbf{88} (2019), 1805--1827.

\bibitem{ouimet_nb}
	F. Ouimet,
	\emph{A refined continuity correction for the negative binomial distribution and asymptotics
	of the median}, Metrika \textbf{86} (2023), 827--849.

\bibitem{payton_yy1989}
	M. E. Payton, L. J. Young, J. H. Young,
	\emph{Bounds for the difference between median and mean of beta and negative binomial
	distributions}, Comm. Statist. Theory Methods \textbf{18} (1989), 1497--1501.

\bibitem{pedersen_quantiles}
	H. L. Pedersen,
	\emph{On the asymptotic behaviour of the quantiles in the gamma distribution},
	Exp. Math., published online 2024, DOI \url{10.1080/10586458.2024.2424473}.

\bibitem{peizer_pratt1968}
	D. B. Peizer, J. W. Pratt,
	\emph{A normal approximation for binomial, $F$, beta, and other common, related tail
	probabilities, I}, J. Amer. Statist. Assoc. \textbf{63} (1968), 1416--1456.

\bibitem{petrov}
	V. V. Petrov,
	\emph{Sums of Independent Random Variables}, Springer, Berlin, 1975.

\bibitem{ruzankin2020}
	P. S. Ruzankin,
	\emph{On absolute central moments of the Poisson distribution},
	J. Stat. Theory Pract. \textbf{14} (2020), art.~56.

\bibitem{temme1992}
	N. M. Temme,
	\emph{Asymptotic inversion of the incomplete beta function},
	J. Comput. Appl. Math. \textbf{41} (1992), 145--157.

\end{thebibliography}
\end{document}